\newtheorem{theorem}{Theorem}[section]
\newtheorem{lemma}[theorem]{Lemma}
\newtheorem{proposition}[theorem]{Proposition}
\newtheorem{remark}[theorem]{Remark}
\newtheorem{example}[theorem]{Example}
\newtheorem{assumption}[theorem]{Assumption}
\newcommand{\supp}{\textnormal{supp}\,}
\newcommand{\sinc}{\textnormal{sinc}}
\title{A sampling theorem on shift-invariant spaces associated with the fractional Fourier transform domain}
\author{Sinuk Kang}
\date{}
\begin{document}
\maketitle

\begin{abstract}
As a generalization of the Fourier transform, the fractional Fourier transform was introduced and has been further investigated both in theory and in applications of signal processing. 
We obtain a sampling theorem on shift-invariant spaces associated with the fractional Fourier transform domain. The resulting sampling theorem extends not only the classical Whittaker-Shannon-Kotelnikov sampling theorem associated with the fractional Fourier transform domain, but also extends the prior sampling theorems on shift-invariant spaces.
\newline

\noindent Keyword: the fractional Fourier transform, shift-invariant space, uniform sampling, reproducing kernel Hilbert space
\newline
Mathematics Subject Classification (2010) Primary  94A20 $\cdot$ Secondary 42C15
\end{abstract}

\section{Introduction}
Let $PW_\pi := \{ f \in L^2(\mathbb R) \cap C(\mathbb R) : \supp \hat{f} \subset [-\pi, \pi] \}$ be the Paley-Wiener space of signals band-limited to $[-\pi, \pi]$ where we take the Fourier transfom (FT) as $  \mathcal{F}[f](\xi) =\hat{f}(\xi) := \frac{1}{\sqrt{2\pi}}\int_{\mathbb R} f(t) e^{-it\xi} dt$ for $f(t) \in L^1(\mathbb R) \cap L^2 (\mathbb R )$. Then the Whittaker-Shannon-Kotelnikov (WSK) sampling theorem \cite{Kotelnikov:1933vx,Shannon:1949uo,Whittaker:1915wb} says that  any $f(t)$ in $ PW_\pi $ is determined by its samples $\{ f(n) : n \in \mathbb Z \}$, and can be reconstructed via 
\begin{equation*}	
	f(t) = \sum_{n \in \mathbb Z} f(n) \textnormal{sinc}(t-n)
\end{equation*} where $\textnormal{sinc} (t):=\frac{\sin \pi t}{\pi t} $. 

As a generalization of FT, the fractional Fourier transform (FrFT) was introduced in 1980s \cite{McBRIDE:1987el,NAMIAS:1980hv}. 
For any $f \in L^2 (\mathbb R)$ and $\theta \in \mathbb R$ we let  
	\begin{equation}\label{defFrFT}
		\mathcal{F}_{\theta}[f](\xi) = \hat{f}_{\theta} (\xi) := \int_{\mathbb R} f(t) {K}_{\theta}(t, \xi) dt
	\end{equation} be the FrFT of $f(t)$ with respect to $\theta$	
	where 
	\begin{equation*}
		{K}_{\theta}(t,\xi) = 
		\left\{
			\begin{array}{ll}
				\delta(t-\xi) & \textnormal{if }  \theta = 2\pi n,~n \in \mathbb Z  \\
				\delta(t+\xi) & \textnormal{if }  \theta+\pi= 2\pi n,~n \in \mathbb Z \\
				c(\theta) e^{ia(\theta)(t^2+\xi^2) -ib(\theta)t \xi} & \textnormal{otherwise}
			\end{array}
		\right.
	\end{equation*}
is the transformation kernel with $c(\theta) = \sqrt{\frac{1 - i \cot \theta}{2\pi}}$, $a(\theta) = \frac{\cot \theta}{2}$, and $b(\theta) = \csc\theta$. 
The inverse FrFT with respect to $\theta$ is defined by the FrFT with respect to $-\theta$, that is, 
	\begin{equation*}
		f(t) = \int_{\mathbb R} \hat{f}_{\theta} (\xi) {K}_{-\theta} (t,\xi) d\xi.
	\end{equation*} 
For notational ease we write $a(\theta)$ and $b(\theta)$ as $a$ and $b$, respectively, whereas we keep $c(\theta)$ to avoid a confusion over $c(n)$ of a sequence $\mathbf{c} = \{ c(n) \}_n $. 
Note that the FrFT is a unitary operator from $L^2(\mathbb R)$ onto $L^2(\mathbb R)$ and corresponds to the FT when $\theta = \frac{\pi}{2}$. 
Let 
\begin{equation*}
	\mathcal{M}_\theta [f] (t) = \vec{f}(t) := \lambda_\theta (t) f(t)
\end{equation*} 
be a unitary operator from $L^2(\mathbb R)$ onto $L^2(\mathbb R)$
where 
\begin{equation} \label{v0.7eq1}
	\lambda_{\theta} (\cdot) := e^{ia(\cdot)^2}
\end{equation} denotes a domain independent modulation function \cite{Zayed:2011hd}.
We call $\vec{f}(t)$ the chirp-modulated version of $f(t)$.
Then it follows, by definition, that 
\begin{equation} \label{v1eq2.01}
	\hat{f}_\theta (\xi) = c(\theta)\mathcal{M}_\theta [\hat{\vec{f}}(b\xi)](\xi).
\end{equation} 
\eqref{v1eq2.01} implies that $f(t)$ is band-limited to $[-\omega, \omega]$ in the FrFT domain if and only if $\vec{f}(t)$ is band-limited to $[-\frac{\omega}{|b|},\frac{\omega}{|b|}]$ in the FT domain. Based on this observation, one can easily derive a sampling expansion of band-limited signals in the FrFT domain from the corresponding one of band-limited signals in the FT domain \cite{Candan:2003ih,Tao:2008dg,Torres:2006ja,Zayed:1996hg}. For instance, assume that $f(t)$ is band-limited to $[-\pi, \pi]$ in the FrFT domain. Since $ \supp\hat{f}_\theta(\xi) \subseteq [-\pi, \pi] $ is equivalent to $\supp\hat{\vec{f}} (\xi) \subseteq [-\frac{\pi}{|b|},\frac{\pi}{|b|}]$, one can deduce from the well-known sampling expansion of $\vec{f}$, $\vec{f}(t) =  \sum_{n \in \mathbb Z } \vec{f}({bn})\, {\sinc} \frac{1}{b}(t-n) $, that $${f}(t) = e^{-iat^2} \sum_{n \in \mathbb Z } {f}({bn})e^{-ia(bn)^2} {\sinc} \frac{1}{b}(t-n).$$

On the other hand, Hardy \cite{Hardy:1941wq} pointed out that the WSK sampling theorem can be proved by so-called Fourier duality between $PW_\pi$ and $L^2 [0,2\pi]$. To describe it we first define the Zak transform. For any $\phi(t) \in L^2(\mathbb R)$ let $Z_\phi (t,\xi) := \sum_{n \in \mathbb Z} \phi(t-n) e^{in\xi}$ be the Zak transform \cite{JANSSEN:1988ul} of $\phi(t)$. Then $Z_\phi(t, \xi)$ is well-defined a.e. on $\mathbb R^2$ and quasi-periodic in the sense that $Z_\phi(t+1, \xi) = Z_\phi (t,\xi) e^{i\xi}$ and $Z_\phi(t,\xi+2\pi) = Z_\phi (t,\xi)$.  

Now consider an integral operator $\mathcal{J}_{PW}$ from $L^2 [0,2\pi]$ into $PW_\pi$ with the kernel $Z_{\sinc}(t,\xi)$, defined by
\begin{equation*}
	\mathcal{J}_{PW}F (t) :=\langle F(\cdot), \overline {Z_{\sinc}(t,\cdot)} \rangle =  \sum_{n \in \mathbb Z} \langle F(\cdot),  e^{-in\cdot}  \rangle \sinc(t-n),~ F \in L^2 [0,2\pi].
\end{equation*}
Here $\langle \cdot , \cdot \rangle$ denotes the inner product in $L^2[0,2\pi]$. Then $\mathcal{J}_{PW}$ is a bounded invertible operator from $L^2 [0,2\pi]$ onto $PW_\pi$. Thus we obtain for any $F(\xi) \in L^2[0,2\pi]$
\begin{equation*}
	f(t) : = \mathcal{J}_{PW}F (t)   = \sum_{n \in \mathbb Z} f(n) \sinc(t-n)
\end{equation*} which is the WSK sampling expansion on $PW_\pi$.
Motivated by Hardy's approach, Garcia et al. \cite{Garcia:2005fz} define an integral operator $\mathcal{J}$ from $L^2[0,2\pi]$ onto its range, say $V(\phi)$, by
\begin{equation*}
	\mathcal{J}F (t) :=\langle F(\cdot), \overline {Z_\phi(t,\cdot)} \rangle,~ F \in L^2 [0,2\pi]
\end{equation*}for some $\phi(t) \in L^2(\mathbb R)$.
They prove that under a suitable condition on $\phi(t)$, $\mathcal{J}$ is a bounded invertible operator from $L^2[0,2\pi]$ onto $V(\phi)$ and $\{ Z_\phi(n,\xi) : n \in \mathbb Z\}$ forms a stable basis (or a frame) of $V(\phi)$. Thus, by the same token, it follows via $\mathcal{J}$ that
\begin{equation*}
	f(t) = \sum_{n \in \mathbb Z } f(n) S_n (t),~ f(t) \in V(\phi)
\end{equation*}where $S_n (t)$ is in $V(\phi)$ such that $\{ \mathcal{J}[S_n](\xi) : n \in \mathbb Z\}$ is the dual of $\{ Z_\phi(n,\xi) : n \in \mathbb Z\}$ in $L^2[0,2\pi]$. Note that $V(\phi)=  \overline{ \textnormal{span}} \{ \phi(t-n) : n \in \mathbb Z\}$ and $f(t) \in V(\phi)$ implies $f(t-n) \in V(\phi)$ for $n \in \mathbb Z$. We call $V(\phi)$ the shift-invariant space generated by $\phi(t)$. Obviously, this approach reduces to Hardy's when $\phi(t) = \sinc(t)$.

$V(\phi)$, as a generalization of $PW_\pi$(=V(\sinc)), has been widely studied from the beginning of 1990s. The structure of $V(\phi)$ is addressed by so-called fiber analysis in \cite{Bownik:2000uy,DEBOOR:1994ed,RON:1995wk}. A sampling theorem on $V(\phi)$ has been investigated in various directions. See \cite{Aldroubi:2002cz,Chen:2002im,Garcia:2006ir,Kang:2011bf,Kim:2008ik,Sun:2003wq,Walter:2005do} and references therein.

Recently, Bhandari and Zayed \cite{Zayed:2011hd} extend $V(\phi)$ into a chirp-modulated shift-invariant space
\begin{equation*}
	V_\theta (\phi)  := {closure}\, \{  (\mathbf{c} \ast_\theta \phi )(t) : \mathbf{c} \in \ell^2(\mathbb Z) \}.
\end{equation*} 
Here $\ast_\theta$ is the fractional semi-discrete convolution to be specified later in \eqref{v0.2eq1.01}. 
$V_\theta(\phi)$ corresponds to $V(\phi)$ when $\theta=\frac{\pi}{2}$, and is no longer (integer) shift-invariant.
Using \eqref{v1eq2.01}, they obtain a sampling expansion on $V_\theta(\phi)$. 
However there is a gap in determining the structure of $V_\theta(\phi)$ (Remark \ref{v1.4gap}) and their sampling expansion is formally presented: no condition is given for such sampling expansion to hold in $V_\theta(\phi)$.

In this paper, motivated by \cite{Garcia:2005fz}, we define a bounded invertible operator $\mathcal{J}_\theta$ from $L^2[0,\frac{2\pi}{|b|}]$ onto $V_\theta(\phi)$, and use it to derive a sampling expansion on $V_\theta(\phi)$. We call this isomorphic relationship the fractional Fourier duality. The structure of $V_\theta(\phi)$ is also addressed.
Our results extend those of \cite{Garcia:2005fz,Kim:2008ik} and improve the previous sampling theory on $V_\theta(\phi)$ by filling the gap in \cite{Zayed:2011hd}.

This paper is organized as follows. In Section \ref{preliminary} we define the notation and terminology needed throughout the paper and provide useful properties of them. In Section \ref{secRKHS} a condition for $V_\theta(\phi)$ to be a reproducing kernel Hilbert space is presented. In Section \ref{FrFourierDuality} the fractional Fourier duality is defined. We discuss conditions under which a certain sequence of functions forms a frame or a Riesz basis of $L^2[0,\frac{2\pi}{|b|}]$. Using this together with the Fourier duality, we derive a sampling expansion on $V_\theta(\phi)$. As an illustrating example the sampling theorem of band-limited signals in the FrFT domain is given.

\section{Preliminary}\label{preliminary}

A sequence $\{\phi_n:n\in \mathbb{Z}\}$ of vectors in a separable Hilbert space $\mathcal{H}$ is

\begin{itemize}

\item a Bessel sequence with a bound $B$ if there is constant $B>0$ such that
\begin{equation*}
 \sum\limits_{n\in \mathbb{Z}}|\langle
\phi ,\phi _{n}\rangle|^{2}\leq B\Vert \phi \Vert ^{2},~\phi \in
\mathcal{H};
\end{equation*}
\item  a frame of $\mathcal{H}$ with bounds $(A,B)$ if there are constants $B \geq A >0$ such that
\begin{equation*}
A\Vert \phi \Vert ^{2}\leq \sum\limits_{n\in \mathbb{Z}}|\langle
\phi ,\phi _{n}\rangle|^{2}\leq B\Vert \phi \Vert ^{2},~\phi \in
\mathcal{H};
\end{equation*}

\item  a Riesz basis of $\mathcal{H}$ with bounds $(A,B)$ if it is complete in $\mathcal{H}$ and there are constants $B \geq A >0$ such that
\begin{equation*}
A\Vert \mathbf{c}\Vert ^{2}\leq \Vert \sum\limits_{n\in
\mathbb{Z}}c(n)\phi _{n}\Vert ^{2}\leq B\Vert \mathbf{c}\Vert
^{2},~\mathbf{c}=\{c(n)\}_{n\in \mathbb{Z}}\in \ell^{2},
\end{equation*}
where $\Vert \mathbf{c}\Vert ^{2}:=\sum\limits_{n\in
\mathbb{Z}}|c(n)|^{2}.$
\end{itemize}

We introduce some useful properties of the FrFT \cite{Almeida:1994hr,Zayed:1998bg}:
\begin{itemize}
	\item $\langle f,g \rangle_{L^2(\mathbb R)} = \langle \hat{f}_\theta, \hat{g}_\theta \rangle_{L^2 (\mathbb R)}$ (Parseval's relation);
	\item $ \mathcal{F}_\theta [ \frac{c(\theta)}{\sqrt{2\pi}} \overline{\lambda_\theta (t)} (\vec{f} \ast \vec{g})(t)](\xi)  =  \hat{f}_\theta(\xi) \hat{g}_\theta (\xi) \overline{\lambda_\theta (t)} $.
\end{itemize} 
We have defined the FrFT $\mathcal{F}_\theta$ in the previous section as a generalization of the FT $\mathcal{F}$. Analogously, we define useful notations associated with $\theta$, most of which are borrowed from \cite{Zayed:2011hd}.
For any $\phi(t) \in L^2(\mathbb R)$ let 
	\begin{equation} \label{v0.8eq1.1}
		C_{\phi, \theta}(t) := \sum_{n \in \mathbb Z} |\vec{\phi} (t-n)|^2 \textnormal{ and } G_{\phi, \theta}(\xi) := \sum_{n \in \mathbb Z} | \hat{\phi}_{\theta} (\xi + n\Delta)|^2
	\end{equation} where 
	\begin{equation}
		\Delta := \frac{2\pi}{|b|}= 2\pi|\sin\theta|.
	\end{equation}
Note that $\| \phi(t) \|^2_{L^2(\mathbb R)}  = \| C_{\phi, \theta} (t)\|^2_{L^1[0,1]} = \| G_{\phi,\theta}(\xi) \|^2_{L^1[0,\Delta]}$.
For any $\mathbf{c} = \{ c(n)  \}_n \in \ell^2 (\mathbb Z)$ let  
	\begin{equation*}
		\hat{\mathbf{c}}_{\theta} (\xi) = \sum_{n \in \mathbb Z} c(n) {K}_{\theta} (n, \xi)
	\end{equation*}
be the discrete FrFT of $\mathbf{c}$ with respect to $\theta$.  	
The inverse discrete FrFT is $c(n) = \int_0^{\Delta} \hat{\mathbf{c}}_{\theta}(\xi) {K}_{-\theta} (n, \xi) d\xi$. We remark that, as a consequence of Lemma \ref{v0.3lem3.3}, $\{ {K}_{\theta} (n, \xi) : n \in \mathbb R\}$ is an orthonormal basis of $L^2[0,\Delta]$.

A Hilbert space $\mathcal{H}$ consisting of complex valued functions on a set
$E$ is called a reproducing kernel Hilbert space(RKHS) if
there is a function
$q(s,t)$ on $E\times E$, called the reproducing kernel of $\mathcal{H}$,
satisfying
\begin{itemize}
\item $q(\cdot,t)\in \mathcal{H}$ for each $t$ in $E$,
\item $\langle f(s), q(s,t)\rangle=f(t)$, $f\in \mathcal{H}$.
\end{itemize}
In an RKHS $\mathcal{H}$ any norm converging sequence also converges uniformly on any subset of $E$, on which $\|q(\cdot, t)\|_{\mathcal{H}}^{2} = q(t,t)$ is bounded. The reproducing kernel of $\mathcal{H}$ is unique and obtained by $q(s,t) = \sum_{n \in \mathbb Z} e_n (s) \overline{e_n (t)}$ where $\{ e_n : n \in \mathbb Z \}$ is an orthonormal basis of $\mathcal{H}$ \cite{Higgins:1996uo}. 

For any $\mathbf{c} = \{ c(n) \}_n \in \ell^2 (\mathbb Z)$ and $ \phi (t) \in L^2(\mathbb R) $ let 
\begin{equation} \label{v0.2eq1.01}
	T_\theta (\mathbf{c}) = ( \mathbf{c} \ast_{\theta} \phi )(t):=  c(\theta)\overline{\lambda_\theta(t)} (\vec{\mathbf{c}} \ast \vec{\phi})(t)
\end{equation}
be the fractional semi-discrete convolution of $\mathbf{c}$ and $\phi(t)$ with respect to $\theta$. Here $\ast$ denotes the conventional semi-discrete convolution, i.e. $(\mathbf{c} \ast \phi)(t) = \sum_{n \in \mathbb Z} c(n) \phi(t-n)$. It should be noticed that \eqref{v0.2eq1.01} may or may not converge in $L^2 (\mathbb R)$.

\begin{lemma} [cf. Lemma 1 of \cite{Zayed:2011hd}]\label{v0.8lemma4.1}
Let $\phi(t) \in L^2 (\mathbb R)$. If $\{ \vec{\phi} (t-n) : n \in \mathbb Z \}$ is a Bessel sequence then for any $\mathbf{c} = \{ c(n)\}_n \in \ell^2(\mathbb Z)$, $( \mathbf{c} \ast_{\theta} \phi )(t) $ converges in $L^2(\mathbb R)$ and
	\begin{equation*}
		\mathcal{F}_{\theta}[\mathbf{c} \ast_{\theta} \phi](\xi) = \overline{\lambda_{\theta} (\xi)} \hat{\mathbf{c}}_{\theta} (\xi)  \hat{\phi}_\theta (\xi).
	\end{equation*}
\end{lemma}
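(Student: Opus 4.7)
The plan is to handle the two assertions in order: $L^2$-convergence, then the Fourier identity.

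For convergence, I will use that chirp modulation is unimodular: $|\lambda_\theta(n)|=1$ for every $n$, so $\vec{\mathbf{c}}:=\{\lambda_\theta(n)c(n)\}_n\in\ell^2(\mathbb Z)$ with the same norm as $\mathbf c$. The Bessel condition on $\{\vec\phi(\cdot-n):n\in\mathbb Z\}$ is equivalent to the boundedness of the synthesis operator $\mathbf d\mapsto\sum_n d(n)\vec\phi(\cdot-n):\ell^2(\mathbb Z)\to L^2(\mathbb R)$; applying it to $\vec{\mathbf{c}}$ shows that $(\vec{\mathbf{c}}\ast\vec\phi)(t)$ converges in $L^2(\mathbb R)$. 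Multiplying by $c(\theta)\overline{\lambda_\theta(t)}$, a bounded pointwise operator on $L^2(\mathbb R)$ (since $|\overline{\lambda_\theta(t)}|=1$), then yields $\mathbf c\ast_\theta\phi\in L^2(\mathbb R)$.

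For the identity, I would exploit the unitarity of $\mathcal F_\theta$ to commute the transform with the $L^2$-convergent series, reducing matters to the per-term computation of $\mathcal F_\theta[c(\theta)\overline{\lambda_\theta(t)}\vec\phi(t-n)](\xi)$. Inserting the explicit kernel $K_\theta(t,\xi)=c(\theta)\lambda_\theta(t)\lambda_\theta(\xi)e^{-ibt\xi}$, the prefactor $\overline{\lambda_\theta(t)}$ cancels the $\lambda_\theta(t)$ in the kernel, and a change of variable $s=t-n$ turns the remaining integral into $e^{-ibn\xi}$ times the ordinary Fourier transform $\hat{\vec\phi}(b\xi)$. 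Regrouping $c(\theta)\lambda_\theta(n)\lambda_\theta(\xi)e^{-ibn\xi}$ reconstitutes $K_\theta(n,\xi)$, while $c(\theta)\lambda_\theta(\xi)\hat{\vec\phi}(b\xi)$ equals $\hat\phi_\theta(\xi)$ by \eqref{v1eq2.01}. Summing the resulting contributions $\lambda_\theta(n)c(n)$ over $n$ produces the claimed factorization $\overline{\lambda_\theta(\xi)}\hat{\mathbf{c}}_\theta(\xi)\hat\phi_\theta(\xi)$.

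The main difficulty is the chirp bookkeeping: $K_\theta$ enters twice in the calculation—once from the FrFT acting on $\mathbf c\ast_\theta\phi$ and once through \eqref{v1eq2.01} used to convert $\hat{\vec\phi}(b\xi)$ into $\hat\phi_\theta(\xi)$—so the overlapping chirp factors and constants $c(\theta)$, $\lambda_\theta(n)$, $\lambda_\theta(\xi)$ must be tracked carefully and collapse to the single $\overline{\lambda_\theta(\xi)}$ on the output side via the unimodularity of $\lambda_\theta$ together with the relation $|c(\theta)|^2=1/\Delta$. A more compact alternative would be to identify $\mathbf c$ with the tempered distribution $\sum_n c(n)\delta_n$ and to invoke the FrFT convolution identity listed in Section \ref{preliminary}, but the direct term-by-term calculation is preferable because it makes transparent that the Bessel hypothesis is used only in Step 1, whereas the Fourier identity itself is a pointwise-algebraic consequence of \eqref{v1eq2.01} and the definition of the discrete FrFT.
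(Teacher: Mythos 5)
Your proposal is correct and follows essentially the same route as the paper, which simply delegates the argument to Lemma 7.1.6 of Christensen and Lemma 1 of Bhandari--Zayed: the Bessel hypothesis gives a bounded synthesis operator applied to the chirped coefficients $\vec{\mathbf{c}}$, unimodularity of $\lambda_\theta$ transfers the $L^2$-convergence to $\mathbf{c}\ast_\theta\phi$, and the factorization $\mathcal{F}_\theta[\mathbf{c}\ast_\theta\phi](\xi)=\overline{\lambda_\theta(\xi)}\hat{\mathbf{c}}_\theta(\xi)\hat{\phi}_\theta(\xi)$ comes from the chirp bookkeeping with $K_\theta$ and \eqref{v1eq2.01}, exactly as you outline. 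Two trivial remarks: the relation $|c(\theta)|^2=1/\Delta$ is not actually needed in the cancellation, and the per-term change-of-variable computation should be read in the $L^2$ (Plancherel/shift-property) sense, with the limit identified a.e. along a subsequence, since $\vec\phi$ need not be integrable.
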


\begin{proof}
We refer to Lemma 7.1.6 of \cite{Christensen:2008wy} and Lemma 1 of \cite{Zayed:2011hd}. 
\end{proof}

\begin{remark}
To make sure $(c\ast_\theta \phi)(t)$ to be well-defined in $L^2(\mathbb R)$, we assume in Lemma \ref{v0.8lemma4.1} that $\{ \vec{\phi} (t-n) : n \in \mathbb Z \}$ is a Bessel sequence, while the authors of \cite{Zayed:2011hd} assumed $\phi$ to have a compact support (Lemma 1 of \cite{Zayed:2011hd}).
\end{remark}
For $\phi(t) \in L^2(\mathbb R)$ we have defined
\begin{equation*}
	V_\theta (\phi)  := {closure}\, \{  (\mathbf{c} \ast_\theta \phi )(t) : \mathbf{c} \in \ell^2(\mathbb Z) \}
\end{equation*} 
as the chirp-modulated shift-invariant space generated by $\phi(t)$. As already mentioned $V_\theta(\phi)$ may or may not be well-defined in $L^2 (\mathbb R)$.
For notational ease we let
\begin{equation} \label{v0.4eq1.1}
	\eta_{\theta}(t) := c(\theta) \overline{\lambda_\theta(t)}
\end{equation} 
so that
\begin{equation*}
	V_\theta(\phi)  = \overline{\textnormal{span}} \{ \eta_{\theta} (t)\vec{\phi}(t-n) : n \in \mathbb Z \}.
\end{equation*}
Note that $\sqrt{\Delta}|\eta_{\theta}(t)| =\sqrt{\Delta} |c(\theta)|  = 1$ for $t \in \mathbb R$.
For any $\phi(t) \in L^2 (\mathbb R)$ let
\begin{equation} \label{v0.9defZAK} 
	Z_{\phi, \theta} (t, \xi) := \sum_{n \in \mathbb Z} \vec{\phi} (t-n) \lambda_\theta(n) \overline{{K}_\theta (n, \xi)} 
\end{equation} 
be the fractional Zak transform of $\phi(t)$ with respect to $\theta$. As a consequence of Lemma \ref{v0.3lem3.3}, $\{\lambda_\theta(n) {K}_\theta (n , \xi) : n \in \mathbb Z \}$ is an orthononal basis of $L^2[0,\Delta]$ so that $  \| Z_{\phi, \theta} (t, \cdot) \|^2_{L^2[0,\Delta]} = \sum_{n \in \mathbb Z}|\vec{\phi} (t-n) |^2$ a.e. on $[0,1]$. By Lebesgue's dominated convergence theorem we have
\begin{eqnarray*}
	\| Z_{\phi, \theta} (t,\xi)\|^2_{L^2([0,1]\times[0,\Delta])}  = \int_0^1 \sum_{n \in \mathbb Z} | \vec{\phi} (t-n)|^2 dt 
	 =  \sum_{n \in \mathbb Z} \int_0^1 | {\phi} (t-n)|^2 dt = \| \phi \|^2_{L^2 (\mathbb R)}
\end{eqnarray*}
and, therefore, $Z_{\phi, \theta} (t, \xi)$ is well-defined a.e. on $[0,1]\times[0,\Delta]$. Since 
	\begin{equation}	 \label{v0.2eq2}
		Z_{\phi, \theta} (t-n, \xi) =  Z_{\phi, \theta} (t, \xi) e^{-ibn\xi},~n \in \mathbb Z 
	\end{equation} and	
$Z_{\phi, \theta} (t, \xi +\Delta) = Z_{\phi, \theta} (t, \xi) e^{-ia\Delta(\Delta + 2\xi)}$, $Z_{\phi, \theta} (t, \xi)$ is well-defined a.e. on $\mathbb R^2$. 
When $\theta= \frac{\pi}{2}$, $Z_{\phi,\theta}(t,\xi)$ corresponds to $Z_\phi (t,\xi)$ where $Z_\phi (t,\xi):=\sum_{n \in \mathbb Z} \phi(t-n) e^{in\xi}$ denotes the conventional Zak transform \cite{JANSSEN:1988ul} of $\phi(t)$. 

\begin{remark}
Bhandari and Zayed define the fractional Zak transform in different way (cf. Definition 9 of \cite{Zayed:2011hd}). Almost all properties of their fractional Zak transform coincide with ours except \eqref{v0.2eq2}.
\end{remark}

We close this section with the following useful observations:
\begin{eqnarray} 
	&&   C_{\phi,\theta}(t) = C_\phi (t); \label{v1eq2.02} \\
	&&   G_{\phi,\theta}(\xi) =\frac{1}{\sqrt\Delta} G_{\vec\phi} (b\xi);  \label{v1eq2.001}\\
	&&   Z_{\phi,\theta}(t,\xi) =  \overline{\lambda_\theta (\xi)} Z_{\vec\phi} (t,b\xi), \label{v1eq2.03}
\end{eqnarray} where $C_\phi (t)$, $G_\phi(\xi)$ and $Z_\phi(t,\xi) $ denote $C_{\phi,\frac{\pi}{2}}(t)$, $G_{\phi,\frac{\pi}{2}}(\xi)$ and $Z_{\phi,\frac{\pi}{2}}(t,\xi)$, respectively.

\section{$V_\theta({\phi})$ as an RKHS} \label{secRKHS}

The aim of Section \ref{secRKHS} is to determine $V_\theta(\phi)$ as an RKHS.
In \cite{Kim:2008ik}, Kim and Kwon show that $V(\phi)$ becomes an RKHS depending on behavior of $C_{\phi} (t)$. Their result, however, can not be directly applied to $V_\theta(\phi)$ since it is no longer a shift-invariant space.

Consider the linear operator $\mathcal{U}_\theta:V(\vec{\phi}) \rightarrow V_\theta(\phi)$ defined by
\begin{equation*}
	(\mathcal{U}_\theta f )(t) := \eta_{\theta}(t)f(t) ,~f  \in V(\vec{\phi}).
\end{equation*} 
$\mathcal{U}_\theta$ is an isomorphism from $V(\vec{\phi})$ onto $V_\theta(\phi)$ and $\| \mathcal{U}_\theta f \|^2_{L^2(\mathbb R)} = \frac{1}{\Delta} \| f \|^2_{L^2(\mathbb R)} $. It is easy to see that for $V_\theta(\phi)$ to be an RKHS, it is necessary that $\phi(t)$ is well-defined everywhere on $\mathbb R$ and $C_{\phi, \theta} (t) < \infty$ for $t \in \mathbb R $.

Conversely, following Proposition 2.3. of \cite{Kim:2008ik}, we have:
\begin{proposition} \label{v1prop3.2}
Let $\phi(t) \in L^2 (\mathbb R)$ be well-defined everywhere on $\mathbb R$, and let $C_{\phi, \theta}(t)$ and $\eta_\theta (t)$ be given by \eqref{v0.8eq1.1} and \eqref{v0.4eq1.1}, respectively. Assume $C_{\phi, \theta}(t) < \infty$ for $t \in \mathbb R$. 
\begin{enumerate}
	\item[(a)] If $\{\eta_\theta(t) \vec{\phi} (t-n) : n \in \mathbb Z \}$ is a frame of $V_{\theta}(\phi)$, then $V^p_\theta (\phi):=\{ (\mathbf{c} \ast_\theta \phi)(t) : \mathbf{c} \in N(T_\theta)^{\perp} \}$ is an RKHS in which any $f(t) = (\mathbf{c} \ast_\theta \phi )(t)$ is the pointwise limit of $\eta_\theta(t)\sum_{n \in \mathbb Z} \vec{c}(n) \vec{\phi} (t-n) $, which converges also in $L^2(\mathbb R)$. Here $N(T_\theta)^{\perp}$ denotes the orthogonal complement of $N(T_\theta):=Ker(T_\theta)$ in $\ell^2(\mathbb Z)$
	
	\item[(b)] If $\{\eta_\theta(t) \vec{\phi} (t-n) : n \in \mathbb Z \}$ is a Riesz basis of $V_{\theta}(\phi)$, then $V_\theta (\phi)$ is an RKHS in which any $f(t) = (\mathbf{c} \ast_\theta \phi )(t)$ is the pointwise limit of $\eta_\theta(t)\sum_{n \in \mathbb Z} \vec{c}(n) \vec{\phi} (t-n) $, which converges also in $L^2(\mathbb R)$.

	\item[(c)] If $\{\eta_\theta(t) \vec{\phi} (t-n) : n \in \mathbb Z \}$ is a frame of $V_{\theta}(\phi)$, $\vec{\phi}(t)$ is continuous on $\mathbb R$ and $\sup_{\mathbb R} C_{\phi, \theta}(t) < \infty$, then then $V_\theta (\phi)$ is an RKHS in which any $f(t) = (\mathbf{c} \ast_\theta \phi )(t)$ is the pointwise limit of $\eta_\theta(t) \sum_{n \in \mathbb Z} \vec{c}(n) \vec{\phi} (t-n) $, which converges also in $L^2(\mathbb R)$ and uniformly on $\mathbb R$ to a continuous function on $\mathbb R$ (so $V_{\theta}(\phi) \subset C(\mathbb R) \cup L^2 (\mathbb R)$).
\end{enumerate}
\end{proposition}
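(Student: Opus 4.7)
The strategy is to reduce everything to Proposition 2.3 of \cite{Kim:2008ik} applied to the ordinary shift-invariant space $V(\vec{\phi})$, and transport its conclusions across the scaled isomorphism $\mathcal{U}_\theta : V(\vec{\phi}) \to V_\theta(\phi)$. First I verify the hypotheses of Kim--Kwon's result for $\vec{\phi}$: well-definedness of $\vec{\phi}(t) = \lambda_\theta(t)\phi(t)$ on $\mathbb R$ follows from that of $\phi$ together with the pointwise definition of $\lambda_\theta$, and the identity $C_{\vec{\phi}}(t) = C_{\phi,\theta}(t)$ from \eqref{v1eq2.02} yields $C_{\vec{\phi}}(t) < \infty$ on $\mathbb R$.

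Next I build a dictionary between the two spaces. Since $|\eta_\theta(t)| \equiv 1/\sqrt{\Delta}$, the map $\sqrt{\Delta}\,\mathcal{U}_\theta$ is a surjective isometry. Moreover, the coefficient map $M : \ell^2(\mathbb Z) \to \ell^2(\mathbb Z)$, $\mathbf{c} \mapsto \vec{\mathbf{c}}$, with $\vec c(n) = \lambda_\theta(n)c(n)$, is unitary, and a direct reading of \eqref{v0.2eq1.01} yields the factorization $T_\theta = \mathcal{U}_\theta \circ T \circ M$, where $T:\ell^2 \to V(\vec{\phi})$ is the ordinary synthesis operator $T\mathbf{d} = \sum_n d(n)\vec{\phi}(\cdot - n)$. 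It follows that $\{\eta_\theta(t)\vec{\phi}(t-n)\}$ is a frame (respectively Riesz basis) of $V_\theta(\phi)$ iff $\{\vec{\phi}(t-n)\}$ is a frame (respectively Riesz basis) of $V(\vec{\phi})$, and $N(T_\theta)^\perp = M^{-1}\!\bigl(N(T)^\perp\bigr)$, so that $V^p_\theta(\phi) = \mathcal{U}_\theta\bigl(V^p(\vec{\phi})\bigr)$.

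Now I invoke Kim--Kwon. For (a), $V^p(\vec{\phi})$ is an RKHS in which every $g = \mathbf{d} \ast \vec{\phi}$ is the pointwise and $L^2$ limit of $\sum_n d(n)\vec{\phi}(t-n)$; setting $\mathbf{d} = M\mathbf{c} = \vec{\mathbf{c}}$ and multiplying through by $\eta_\theta(t)$ produces the stated pointwise and $L^2$ convergence on $V^p_\theta(\phi)$. Since $\mathcal{U}_\theta$ is bounded with bounded inverse and $|\eta_\theta|$ is constant, point evaluation in $V^p_\theta(\phi)$ remains continuous, confirming the RKHS property; a reproducing kernel is $q_\theta(s,t) = \Delta\,\eta_\theta(s)\overline{\eta_\theta(t)}\,q(s,t)$, where $q$ is the kernel of $V^p(\vec{\phi})$. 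Case (b) is identical with $V(\vec{\phi})$ in place of $V^p(\vec{\phi})$. For (c), Kim--Kwon's hypothesis that $\vec{\phi}$ is continuous on $\mathbb R$ and $\sup_{\mathbb R} C_{\vec{\phi}} < \infty$ is inherited from our assumptions via \eqref{v1eq2.02}, yielding uniform convergence on $\mathbb R$ of the series for $g$ to a continuous limit; since $\eta_\theta$ is continuous with $|\eta_\theta| \equiv 1/\sqrt{\Delta}$, the product $f = \eta_\theta \cdot g$ inherits both uniform convergence and continuity.

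The only delicate point is the bookkeeping around $N(T_\theta)^\perp$ in part (a): one must verify that the unitary change $M$ identifies it with $N(T)^\perp$ so that the coefficients appearing in Kim--Kwon's pointwise expansion for $g$ correspond exactly, after the relabelling $\mathbf{d} = \vec{\mathbf{c}}$, to the coefficients $\mathbf{c}$ producing $f = \mathbf{c} \ast_\theta \phi$. Once this is checked, everything else is a routine transfer of analytic properties across the pointwise-isometric identification $\mathcal{U}_\theta$.
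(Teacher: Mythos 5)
Your proposal is correct and follows essentially the same route as the paper: transfer the hypotheses and conclusions of Kim--Kwon's Proposition 2.3 for $V(\vec{\phi})$ across the multiplication operator $\mathcal{U}_\theta$, checking that frame/Riesz properties, point evaluations, and the kernel of the synthesis operator correspond. If anything, your explicit factorization $T_\theta = \mathcal{U}_\theta \circ T \circ M$ with $M$ unitary makes the identification $N(T_\theta)^\perp = M^{-1}(N(T)^\perp)$ (and hence $V^p_\theta(\phi) = \mathcal{U}_\theta(V^p(\vec{\phi}))$) more precise than the paper's terse statement $Ker(T_\theta)=Ker(T)$, and your reproducing-kernel formula correctly carries the factor $\Delta$ coming from $\|\mathcal{U}_\theta f\|^2 = \frac{1}{\Delta}\|f\|^2$.
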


\begin{proof}
Thanks to $\mathcal{U}_\theta$, $\{  \eta_{\theta} (t)\vec{\phi}(t-n) : n \in \mathbb Z \}$ is a Bessel sequence, a frame, a Riesz basis, or an orthonormal basis of $V_{\theta}(\phi)$ if and only if $\{ \vec{\phi} (t-n) : n \in \mathbb Z \}$ is a Bessel sequence, a frame, a Riesz basis, or an orthonormal basis of $V(\vec{\phi})$, respectively. We refer to the proof of Proposition 2.3. of \cite{Kim:2008ik}. Then it is sufficient to show that (i) the point evaluation operator $l_t (\cdot): V_\theta(\phi) \rightarrow \mathbb R$, defined by $l_t (f) := f(t)$ for $t \in \mathbb R$, is bounded, and (ii) $Ker(T_\theta) = Ker(T)$ where $T:= \sqrt{2\pi} T_{\frac{\pi}{2}}$. (i) follows since $ l_t (f)  = l_t (\mathcal{U}^{-1}_\theta \tilde{f}) = \overline{c(\theta)}{\lambda_\theta(t)} l_t (\tilde{f})$ where $\tilde{f} := \mathcal{U}_\theta f $. 
Since $T_\theta(\mathbf{c}) = \eta_\theta(t) T(\vec{\mathbf{c}})$ for $\mathbf{c} \in \ell^2$, we have (ii).
\end{proof}

We have actually proved that $V(\vec\phi)$ is an RKHS if and only if $V_\theta(\phi)$, as an isomorphic image of $V(\vec\phi)$ through $\mathcal{U}_\theta$, is an RKHS. This is not true with an arbitrary isomorphic image of $V(\vec\phi)$. Note here that $q_\theta (s,t) = \eta_\theta(s) \overline{\eta_\theta (t)}q(s,t)$ where $q_\theta(s,t)$ and $q(s,t)$ denote the reproducing kernels of $V_\theta(\phi)$ and $V(\vec\phi)$, respectively. 

We are now interested in the following question: When does $\{\eta_\theta(t) \vec{\phi} (t-n) : n \in \mathbb Z \}$ form a frame or a Riesz basis of $V_\theta(\phi)$? 
If $\theta = \frac{\pi}{2}$ the behavior of a sequence of the form $\{ \eta_\theta(t) \vec{\phi} (t-n) : n \in \mathbb Z \}$ in $V_\theta(\phi)$, can be characterized by so-called fiber analysis. We refer to \cite{Bownik:2000uy,Christensen:2008wy,DEBOOR:1994ed,Kim:2008ik} for details. 
We adapt Theorem 7.1.7 of \cite{Christensen:2008wy} and translate it into the following two propositions. The proofs are omitted since they are essentially the same as the proof Theorem 7.1.7 of \cite{Christensen:2008wy}, provided that one notices $\| T_\theta (\mathbf{c}) \|^2_{L^2 (\mathbb R)} = \| \mathcal{F}_\theta [ T_\theta (\mathbf{c}) ] \|^2_{L^2 (\mathbb R)}  = \| \overline{\lambda_\theta(\xi)} \hat{\mathbf{c}}_\theta(\xi) \hat{\phi}_\theta(\xi) \|^2_{L^2 (\mathbb R)} = \int^{\Delta}_{0} | \hat{\mathbf{c}}_\theta(\xi) |^2 G_{\phi, \theta} (\xi) d\xi$.

\begin{proposition} \label{v0.2prop2.2}Let $\phi(t) \in L^2 (\mathbb R)$, $B \geq A > 0$ and let $\eta_\theta(t)$ be given by \eqref{v0.4eq1.1}. Then $\{ \eta_\theta(t) \vec{\phi} (t-n) : n \in \mathbb Z \}$ is
	\begin{enumerate}
		\item[(a)] a Bessel sequence with bound $B$ if and only if $G_{\phi,\theta}(\xi) \leq B ~\textnormal{a.e. on }[0,\Delta]$; 

		\item[(b)] a frame of $V_{\theta}(\phi)$ with bound $(A,B)$ if and only if 
			\begin{equation*}
				 A \leq G_{\phi,\theta}(\xi) \leq {B}~\textnormal{a.e. on } [0,\Delta] \cap \textnormal{ supp}\, G_{\phi,\theta}(\xi);
			\end{equation*}

		\item[(c)] a Riesz basis of $V_{\theta}(\phi)$ with bound $(A,B)$ if and only if 
			\begin{equation*}
				 A \leq G_{\phi,\theta}(\xi) \leq B~\textnormal{a.e. on }[0,\Delta]; 
			\end{equation*}
			
		\item[(d)] an orthonormal basis of $V_{\theta}(\phi)$ if and only if $G_{\phi,\theta}(\xi) = 1 ~\textnormal{a.e. on }[0,\Delta]$.
	\end{enumerate}
\end{proposition}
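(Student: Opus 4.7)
The plan is to reduce each of the four characterizations to a pointwise statement about $G_{\phi,\theta}$ on $[0,\Delta]$ by transcribing the synthesis operator $T_\theta$ to the FrFT side. First I would establish the key identity flagged in the excerpt. Parseval's relation for the FrFT together with Lemma \ref{v0.8lemma4.1} and $|\lambda_\theta(\xi)|=1$ gives
\begin{equation*}
\|T_\theta(\mathbf{c})\|^2_{L^2(\mathbb{R})} = \int_{\mathbb{R}} |\hat{\mathbf{c}}_\theta(\xi)|^2\,|\hat{\phi}_\theta(\xi)|^2\,d\xi.
\end{equation*}
A direct computation from the definition of the discrete FrFT together with $|c(\theta)|^2 = 1/\Delta$ yields $|\hat{\mathbf{c}}_\theta(\xi)|^2 = \frac{1}{\Delta}\bigl|\sum_{n} \vec{c}(n) e^{-ibn\xi}\bigr|^2$, which is $\Delta$-periodic. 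Breaking $\mathbb{R}$ into translates of $[0,\Delta]$ and using the definition of $G_{\phi,\theta}$ then yields
\begin{equation*}
\|T_\theta(\mathbf{c})\|^2_{L^2(\mathbb{R})} = \int_0^{\Delta} |\hat{\mathbf{c}}_\theta(\xi)|^2\,G_{\phi,\theta}(\xi)\,d\xi.
\end{equation*}

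Next I would use that $\{K_\theta(n,\cdot): n\in\mathbb{Z}\}$ is an orthonormal basis of $L^2[0,\Delta]$, so $\mathbf{c}\mapsto\hat{\mathbf{c}}_\theta$ is an isometric isomorphism from $\ell^2(\mathbb{Z})$ onto $L^2[0,\Delta]$; in particular $\|\mathbf{c}\|_{\ell^2}^2 = \int_0^\Delta|\hat{\mathbf{c}}_\theta(\xi)|^2 d\xi$, and $\hat{\mathbf{c}}_\theta$ can be made to realize any prescribed $F\in L^2[0,\Delta]$. Thus the Bessel, frame, Riesz-basis and orthonormal-basis properties of $\{\eta_\theta(t)\vec{\phi}(t-n)\}$ in $V_\theta(\phi)$ reduce to comparing $\int_0^\Delta |F|^2 G_{\phi,\theta}$ with $\int_0^\Delta |F|^2$ as $F$ ranges over an appropriate subspace of $L^2[0,\Delta]$.

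From here each part follows by choosing $F$ to concentrate on level sets of $G_{\phi,\theta}$. For (a), $\|T_\theta\mathbf{c}\|^2\leq B\|\mathbf{c}\|^2$ for all $\mathbf{c}$ is equivalent to $G_{\phi,\theta}\leq B$ a.e.\ on $[0,\Delta]$. For (c), the Riesz-basis inequality $A\|\mathbf{c}\|^2\leq\|T_\theta\mathbf{c}\|^2\leq B\|\mathbf{c}\|^2$ holds iff $A\leq G_{\phi,\theta}\leq B$ a.e.\ on $[0,\Delta]$; completeness in $V_\theta(\phi)$ is automatic from the definition. Part (d) is the specialization $A=B=1$. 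For (b), the frame condition on the closed span $V_\theta(\phi)$ is equivalent, via the synthesis operator $T_\theta$, to the lower bound $A\|\mathbf{c}\|^2\leq\|T_\theta\mathbf{c}\|^2$ only for $\mathbf{c}\in N(T_\theta)^\perp$; under the isometry $\mathbf{c}\mapsto\hat{\mathbf{c}}_\theta$, $N(T_\theta)$ corresponds to $L^2$-functions vanishing on $\supp G_{\phi,\theta}$, so the bound transfers to $A\leq G_{\phi,\theta}$ a.e.\ on $[0,\Delta]\cap \supp G_{\phi,\theta}$.

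The main obstacle is the bookkeeping in (b): one must correctly identify $N(T_\theta)^\perp$ under the isometry and translate ``frame with bounds $(A,B)$ on the closed span $V_\theta(\phi)$'' into the right synthesis-operator inequality restricted to $N(T_\theta)^\perp$. The remaining cases are direct transplants of Christensen's Theorem 7.1.7, with the conventional Fourier machinery replaced by the FrFT and the displayed identity serving as the bridge.
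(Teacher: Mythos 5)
Your proposal is correct and follows essentially the same route as the paper, which omits the details and simply points to Christensen's Theorem 7.1.7 together with the key identity $\| T_\theta (\mathbf{c}) \|^2_{L^2(\mathbb R)} = \int_0^{\Delta} |\hat{\mathbf{c}}_\theta(\xi)|^2 G_{\phi,\theta}(\xi)\, d\xi$ --- exactly the identity you derive via Parseval, the $\Delta$-periodicity of $|\hat{\mathbf{c}}_\theta|^2$, and the unitarity of $\mathbf{c} \mapsto \hat{\mathbf{c}}_\theta$ coming from the orthonormality of $\{K_\theta(n,\cdot)\}$ in $L^2[0,\Delta]$. Your handling of (b) via $N(T_\theta)^\perp$ and the essential support of $G_{\phi,\theta}$ is the same fiberization bookkeeping implicit in the cited reference, so no new ideas are needed beyond what you wrote.
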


Or equivalently, we have:

\begin{proposition} \label{v0.3prop2.4} Let $\phi(t) \in L^2 (\mathbb R)$ and $B \geq A > 0$, and let $T_\theta$ and $\eta_\theta(t)$ be given by \eqref{v0.2eq1.01} and \eqref{v0.4eq1.1}, respectively. Then $\{ \eta_\theta(t) \vec{\phi} (t-n) : n \in \mathbb Z \}$ is
	\begin{enumerate}
		\item[(a)]  a Bessel sequence with bound $B$ if and only if $T_\theta$ is a bounded linear operator from $\ell^2(\mathbb Z)$ into $V_\theta(\phi)$ and $ \| T_\theta (\mathbf{c}) \|^2_{L^2(\mathbb R)}  \leq B \| \mathbf{c}\|^2,~ \mathbf{c}  \in \ell^2(\mathbb Z)$; 

		\item[(b)] a frame of $V_{\theta}(\phi)$ with bound $(A,B)$ if and only if $T_\theta$ is a bounded linear operator from $\ell^2(\mathbb Z)$ onto $V_\theta(\phi)$ and
			\begin{equation*}
				 A  \| \mathbf{c}\|^2 \leq \| T_\theta (\mathbf{c}) \|^2_{L^2(\mathbb R)}  \leq B \| \mathbf{c}\|^2,~ \mathbf{c}  \in {N}(T_\theta)^{\perp} 
			\end{equation*} where ${N}(T_\theta)^{\perp}$ is the orthogonal complement of ${N}(T_\theta) := {Ker}(T_\theta)$.

		\item[(c)] a Riesz basis of $V_{\theta}(\phi)$ with bound $(A,B)$ if and only if $T_\theta$ is an isomorphism from $\ell^2(\mathbb Z)$ onto $V_\theta(\phi)$ and 
			\begin{equation*}
				  A  \| \mathbf{c}\|^2 \leq \| T_\theta (\mathbf{c}) \|^2_{L^2(\mathbb R)}  \leq B \| \mathbf{c}\|^2,~ \mathbf{c}  \in \ell^2(\mathbb Z);
			\end{equation*}
			
		\item[(d)] an orthonormal basis of $V_{\theta}(\phi)$ if and only if $T_\theta$ is a unitary operator from $\ell^2(\mathbb Z)$ onto $V_\theta(\phi)$ and $ \| T_\theta (\mathbf{c}) \|^2_{L^2(\mathbb R)} = \| \mathbf{c}\|^2,~ \mathbf{c}  \in \ell^2(\mathbb Z)$.
	\end{enumerate}
\end{proposition}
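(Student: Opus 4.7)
The plan is to reduce Proposition \ref{v0.3prop2.4} to Proposition \ref{v0.2prop2.2} via the single identity
\begin{equation*}
\|T_\theta(\mathbf{c})\|^2_{L^2(\mathbb R)} = \int_0^{\Delta}|\hat{\mathbf{c}}_\theta(\xi)|^2\, G_{\phi,\theta}(\xi)\,d\xi,\qquad \mathbf{c}\in\ell^2(\mathbb Z),
\end{equation*}
together with the fact that $\mathbf{c}\mapsto \hat{\mathbf{c}}_\theta|_{[0,\Delta]}$ is a unitary isomorphism from $\ell^2(\mathbb Z)$ onto $L^2[0,\Delta]$. The latter is immediate from the remark, stated just after the definition of the discrete FrFT, that $\{K_\theta(n,\cdot):n\in\mathbb Z\}$ is an orthonormal basis of $L^2[0,\Delta]$, and it gives $\|\mathbf{c}\|^2=\int_0^\Delta |\hat{\mathbf{c}}_\theta(\xi)|^2\,d\xi$.

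To derive the displayed identity I first invoke Lemma \ref{v0.8lemma4.1}, whose Bessel hypothesis is ensured case by case by Proposition \ref{v0.2prop2.2}(a), to get $\mathcal{F}_\theta[T_\theta(\mathbf{c})](\xi)=\overline{\lambda_\theta(\xi)}\hat{\mathbf{c}}_\theta(\xi)\hat{\phi}_\theta(\xi)$. Parseval's relation for the FrFT then yields $\|T_\theta(\mathbf{c})\|^2_{L^2(\mathbb R)}=\int_{\mathbb R}|\hat{\mathbf{c}}_\theta(\xi)|^2|\hat{\phi}_\theta(\xi)|^2\,d\xi$, since $|\lambda_\theta(\xi)|=1$. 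A short computation using $b\Delta=2\pi\,\mathrm{sign}(b)\in 2\pi\mathbb Z$ shows $|\hat{\mathbf{c}}_\theta(\xi+\Delta)|^2=|\hat{\mathbf{c}}_\theta(\xi)|^2$, so folding the integral over the period $\Delta$ produces the sum that defines $G_{\phi,\theta}$ and gives the identity.

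With these two identities in hand, each clause of the proposition matches the corresponding clause of Proposition \ref{v0.2prop2.2}. In (a), $G_{\phi,\theta}\le B$ a.e.\ on $[0,\Delta]$ is clearly equivalent to $\|T_\theta(\mathbf{c})\|^2\le B\|\mathbf{c}\|^2$ for every $\mathbf{c}$. In (c), $A\le G_{\phi,\theta}\le B$ a.e.\ on $[0,\Delta]$ is equivalent to the two-sided bound on all of $\ell^2$; the lower bound then forces $T_\theta$ to be bounded below, hence an isomorphism onto $V_\theta(\phi)$. For (d), $G_{\phi,\theta}\equiv 1$ gives $\|T_\theta(\mathbf{c})\|=\|\mathbf{c}\|$, i.e.\ $T_\theta$ is unitary.

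The only genuinely non-routine point is part (b), where $G_{\phi,\theta}$ may vanish on part of $[0,\Delta]$. Under the unitary correspondence above, the operator identity shows that $\mathbf{c}\in N(T_\theta)$ iff $\hat{\mathbf{c}}_\theta$ vanishes a.e.\ on $\mathrm{supp}\,G_{\phi,\theta}$; consequently $N(T_\theta)^\perp$ corresponds exactly to the closed subspace of $L^2[0,\Delta]$ consisting of functions supported in $\mathrm{supp}\,G_{\phi,\theta}$. Restricting the two-sided bound to $N(T_\theta)^\perp$ then matches precisely the qualifier ``on $[0,\Delta]\cap\mathrm{supp}\,G_{\phi,\theta}$'' in Proposition \ref{v0.2prop2.2}(b), while surjectivity of $T_\theta$ onto $V_\theta(\phi)$ follows from the completeness part of the frame property. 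This identification of $N(T_\theta)^\perp$ with the supported subspace is the main technical step; everything else is bookkeeping.
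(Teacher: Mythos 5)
Your route is essentially the paper's own: the paper omits the proof, remarking only that one should adapt Theorem 7.1.7 of Christensen after noticing precisely the identity $\| T_\theta (\mathbf{c}) \|^2_{L^2(\mathbb R)} = \int_0^{\Delta} |\hat{\mathbf{c}}_\theta(\xi)|^2 G_{\phi,\theta}(\xi)\, d\xi$, and your derivation of that identity (Lemma \ref{v0.8lemma4.1}, FrFT Parseval, $\Delta$-periodicity of $|\hat{\mathbf{c}}_\theta|$, folding) together with the unitarity of $\mathbf{c}\mapsto\hat{\mathbf{c}}_\theta|_{[0,\Delta]}$ is exactly the intended mechanism; routing the conclusion through Proposition \ref{v0.2prop2.2} rather than re-running Christensen's argument is an immaterial difference, and your identification of $N(T_\theta)^\perp$ with the functions supported in $\supp G_{\phi,\theta}$ handles (b) correctly.

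One point needs repair, though it is easily fixed. In the ``if'' directions of each clause you say the Bessel hypothesis of Lemma \ref{v0.8lemma4.1} is ``ensured by Proposition \ref{v0.2prop2.2}(a)''; but at that stage you only know the stated bound on $T_\theta$, not yet $G_{\phi,\theta}\le B$ a.e., so invoking Proposition \ref{v0.2prop2.2}(a) there is circular. Either first note the standard fact that a well-defined bounded synthesis operator with $\|T_\theta\|^2\le B$ already forces $\{\eta_\theta(t)\vec{\phi}(t-n)\}$ (hence $\{\vec{\phi}(t-n)\}$) to be Bessel with bound $B$, or, more in the spirit of Christensen's proof, establish the norm identity first only for finitely supported $\mathbf{c}$ (where Lemma \ref{v0.8lemma4.1} needs no hypothesis) and use density of the corresponding $\hat{\mathbf{c}}_\theta$ in $L^2[0,\Delta]$ to conclude $G_{\phi,\theta}\le B$ a.e., after which the identity extends to all of $\ell^2(\mathbb Z)$. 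Similarly, in (b) surjectivity of $T_\theta$ comes not from completeness alone but from the lower bound on $N(T_\theta)^\perp$, which closes the (dense) range; with these adjustments the argument is sound.
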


\begin{remark} \label{v1.4gap}
In \cite{Zayed:2011hd}, the authors claim that the formula (26) of \cite{Zayed:2011hd} is the necessary and sufficient condition for $\{ \vec{\phi}(t-n) : n \in \mathbb Z \}$ to be a Riesz basis of $V_\theta(\phi)$. However it is the one for $\{ \vec{\phi}(t-n) : n \in \mathbb Z \}$ to be a Riesz basis of $V(\vec{\phi})$, but not of $V_\theta(\phi)$. Or equivalently, it is the necessary and sufficient condition for $\{ \eta_{\theta}(t)\vec{\phi}(t-n) : n \in \mathbb Z \}$ to be a Riesz basis of $V_\theta(\phi)$. In general $\{ \vec{\phi}(t-n) : n \in \mathbb Z \}$ is not even complete in $V_\theta(\phi)$. For instance, let $\phi(t) = \chi_{[0,1)}(t)$ where $\chi_{[0,1)}(t) := 1$ for $t\in[0,1)$ and $0$ otherwise. Then $\eta_\theta(t)\vec{\phi}(t) = c(\theta)\chi_{[0,1)}(t) \in V_\theta(\phi)$ and $\langle \eta_\theta (t) \vec\phi(t), \vec\phi (t-n) \rangle_{L^2 (\mathbb R)} =\langle c(\theta)\chi_{[0,1)}(t) , e^{ia(t-n)^2}\chi_{[0,1)}(t-n) \rangle_{L^2(\mathbb R)}  = 0$ for $n \in \mathbb Z \backslash\{0\}$. For $\{ \vec{\phi}(t-n) : n \in \mathbb Z \}$ to be complete in $V_\theta(\phi)$, there should be some constant $M$ such that $c(\theta)\chi_{[0,1)}(t) = M e^{iat^2}\chi_{[0,1)}(t) $ for $t \in \mathbb R$, which is true only if $a = 0$, or equivalently $\theta = \frac{\pi}{2} +n\pi$ for $n \in \mathbb Z$.  
\end{remark}

\section{The fractional Fourier duality} \label{FrFourierDuality}

In what follows we always let Assumption \ref{assumption} below hold so that by Proposition \ref{v1prop3.2} and \ref{v0.2prop2.2}, $V_\theta(\phi)$ becomes an RKHS and $\{ c(\theta)\overline{\lambda_\theta(t)} \vec{\phi}(t-n) : n \in \mathbb Z \}$ is a Riesz basis of $V_\theta(\phi)$.

\begin{assumption} \label{assumption}
Let $\Delta := \frac{2\pi}{|b|}={2\pi |\sin\theta|}$ and $\phi(t) \in L^2(\mathbb R)$.
	\begin{itemize}
		\item $\phi(t)$ is well-defined everywhere on $\mathbb R$;
		\item there exist constants $B \geq A >0$ such that $ A \leq |G_{\phi, \theta}(\xi)| \leq B$ a.e. on $[0,\Delta]$;
		\item $C_{\phi,\theta}(t) <\infty$ for $t \in \mathbb R$.
	\end{itemize}
\end{assumption}

We consider the bounded linear operator $\mathcal{J}_\theta$ from $L^2[0,\Delta]$ into $V_\theta (\phi)$, defined by
\begin{equation}\label{v0.5eq3.1}
	(\mathcal{J}_\theta F) (t) := ( \{ \langle F, {K}_\theta (n,\cdot) \rangle_{L^2[0,\Delta]} \}_n \ast_\theta \phi) (t) = \langle F, \lambda_\theta (t) \overline{{c(\theta)} Z_{\phi, \theta}(t, \xi)} \rangle_{L^2[0,\Delta]}. 
\end{equation}

\begin{proposition} [cf. Theorem 1 of \cite{Garcia:2005fz}] \label{v0.8prop4.4}
~
	\begin{enumerate}
		\item[(a)] $\mathcal{J}_\theta$ is an isomorphism from $L^2[0,\Delta]$ onto $V_\theta (\phi)$. 
		\item[(b)] $\mathcal{F}_\theta [\mathcal{J}_\theta F] (\xi) = \overline{\lambda_\theta (\xi)} F(\xi) \hat{\phi}_\theta (\xi)$.
		\item[(c)]  $ \lambda_\theta (t) \mathcal{J}_\theta [F(\cdot)  e^{-ibk\cdot}](t) =\lambda_\theta(t-k) \mathcal{J}_\theta F(t-k)$.  
	\end{enumerate}
\end{proposition}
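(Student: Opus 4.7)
The plan is to reduce each claim to properties of the discrete FrFT and the fractional Zak transform that were already set up in the preliminaries. For part~(a), I would factor $\mathcal{J}_\theta$ as the composition $L^2[0,\Delta] \xrightarrow{\Phi} \ell^2(\mathbb Z) \xrightarrow{T_\theta} V_\theta(\phi)$, where $\Phi(F) := \{\langle F, K_\theta(n,\cdot)\rangle_{L^2[0,\Delta]}\}_n$ is the inverse discrete FrFT. Since $\{K_\theta(n,\xi) : n\in \mathbb Z\}$ is an orthonormal basis of $L^2[0,\Delta]$ (the remark following the definition of $Z_{\phi,\theta}$ in \eqref{v0.9defZAK}), $\Phi$ is a unitary map onto $\ell^2(\mathbb Z)$. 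Under Assumption~\ref{assumption}, Proposition~\ref{v0.3prop2.4}(c) says that $T_\theta$ is an isomorphism from $\ell^2(\mathbb Z)$ onto $V_\theta(\phi)$. Composing the two gives~(a).

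For part~(b), I would apply Lemma~\ref{v0.8lemma4.1} to the sequence $\mathbf c = \Phi(F)$; this is legitimate because $\{\vec\phi(t-n):n\in\mathbb Z\}$ is Bessel under Assumption~\ref{assumption}. The lemma yields $\mathcal F_\theta[\mathcal J_\theta F](\xi) = \overline{\lambda_\theta(\xi)}\,\hat{\mathbf c}_\theta(\xi)\,\hat\phi_\theta(\xi)$. But $\hat{\mathbf c}_\theta(\xi) = \sum_n \langle F, K_\theta(n,\cdot)\rangle K_\theta(n,\xi)$ is precisely the orthonormal-basis expansion of $F$ in $L^2[0,\Delta]$, so $\hat{\mathbf c}_\theta = F$, whence~(b) follows.

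For part~(c), I would first insert the definition of $\Phi(F)$ into \eqref{v0.2eq1.01} and interchange the sum with the integral (justified by the $\ell^2$-continuity of $T_\theta$ from~(a)) to obtain the closed form $(\mathcal J_\theta F)(t) = c(\theta)\,\overline{\lambda_\theta(t)}\int_0^\Delta F(\xi)\,Z_{\phi,\theta}(t,\xi)\,d\xi$. Replacing $F(\xi)$ by $F(\xi)e^{-ibk\xi}$ and invoking the translation identity $Z_{\phi,\theta}(t,\xi)e^{-ibk\xi}=Z_{\phi,\theta}(t-k,\xi)$ from~\eqref{v0.2eq2}, the left-hand side of~(c), after multiplication by $\lambda_\theta(t)$, becomes $c(\theta)|\lambda_\theta(t)|^2\int_0^\Delta F(\xi) Z_{\phi,\theta}(t-k,\xi)\,d\xi$, while the right-hand side becomes $c(\theta)|\lambda_\theta(t-k)|^2\int_0^\Delta F(\xi) Z_{\phi,\theta}(t-k,\xi)\,d\xi$. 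Since $|\lambda_\theta|\equiv 1$, both collapse to the common value $c(\theta)\int_0^\Delta F(\xi)Z_{\phi,\theta}(t-k,\xi)\,d\xi$. The only genuine difficulty lies in~(a), where one must verify that the factorization through $\ell^2(\mathbb Z)$ is valid; once that is in hand, (b) and~(c) are bookkeeping via Lemma~\ref{v0.8lemma4.1} and the quasi-periodicity of $Z_{\phi,\theta}$.
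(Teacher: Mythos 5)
Your proposal is correct and takes essentially the same route as the paper: part (a) rests on $\{K_\theta(n,\cdot):n\in\mathbb Z\}$ being an orthonormal basis of $L^2[0,\Delta]$ together with the Riesz-basis property of $\{c(\theta)\overline{\lambda_\theta(t)}\vec\phi(t-n):n\in\mathbb Z\}$ guaranteed by Assumption \ref{assumption} (your appeal to Proposition \ref{v0.3prop2.4}(c) via the factorization $\mathcal J_\theta=T_\theta\circ\Phi$ merely repackages the bijectivity-plus-boundedness argument the paper writes out directly), while (b) is exactly Lemma \ref{v0.8lemma4.1} with the identification $\hat{\mathbf c}_\theta=F$, and (c) is the quasi-periodicity \eqref{v0.2eq2} combined with $|\lambda_\theta(\cdot)|\equiv 1$. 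No gaps.
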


\begin{proof}
$\mathcal{J}_\theta$ maps the orthonormal basis $\{ K_\theta(n,\xi) : n \in \mathbb Z\}$ of $L^2[0,\Delta]$ into the Riesz basis $\{ c(\theta)\overline{\lambda_\theta(t)} \vec{\phi}(t-n) : n \in \mathbb Z \}$ of $V_\theta (\phi)$, so it is bijective. For (a) it suffices to show that $\mathcal{J}_\theta$ is bounded. This follows since $\| (\mathcal{J}_\theta F) (t)\|^2_{L^2(\mathbb R)} = \| \sum_{n \in \mathbb Z} \vec{d}(n) c(\theta) \overline{\lambda_\theta(t)} \vec{\phi}(t-n)\|^2_{L^2(\mathbb R)} \leq  B \| \vec{\mathbf{d}}\|^2 = B \| {\mathbf{d}}\|^2   = B \| F \|^2_{L^2[0,\Delta]}$ where $\mathbf{d} = \{ d(n) := \langle F, {K}_\theta (n,\cdot) \rangle_{L^2[0,\Delta]} : n \in \mathbb Z \}$ and $B$ denotes the upper bound of the Riesz basis $ \{ c(\theta) \overline{\lambda_\theta(t)} \vec{\phi} (t-n) : n \in \mathbb R \}$ of $V_\theta(\phi)$. 

Since $ \{ c(\theta) \overline{\lambda_\theta(t)} \vec{\phi} (t-n) : n \in \mathbb R \}$ is the Riesz basis of $V_\theta(\phi)$, $\{ \vec{\phi} (t-n) : n \in \mathbb Z \}$ is a Bessel sequence. Thus (b) follows by Lemma \ref{v0.8lemma4.1}. Finally (c) is obtained by \eqref{v0.2eq2}. 
\end{proof}

We are to derive a sampling expansion on $V_\theta(\phi)$ of the following form:
\begin{equation} \label{v0.9aim}
	f(t) = \sum_{n \in \mathbb R} f(\sigma + n) S_n (t), ~f \in V_\theta(\phi)
\end{equation}
where $0\leq \sigma <1$ and $\{ S_n(t):n \in \mathbb Z\}$ is a Riesz basis or a frame of $V_\theta (\phi)$.

Since $V_\theta(\phi)$ is an RKHS, $f(t)$ is well-defined for any $t \in \mathbb R$.
Thus for any $F(\xi) \in L^2[0,\Delta]$ we have from \eqref{v0.2eq2} and \eqref{v0.5eq3.1} that
\begin{eqnarray} \label{v1.7eq14.1}
	f(\sigma+n) = \langle F(\xi), \lambda_\theta (\sigma +n ) \overline{{c(\theta)}Z_{\phi, \theta} (\sigma, \xi)}   e^{-ibn\xi} \rangle_{L^2[0,\Delta]}, ~n \in \mathbb Z
\end{eqnarray} where $f(t) := (\mathcal{J}_\theta F)(t)$. 

\begin{lemma}[cf. Theorem 2 of \cite{Garcia:2005fz}]  \label{v0.3lem3.3} Let $g (\xi) \in L^2 [0,\Delta]$ and $|\rho (n)| = 1$ for $n \in \mathbb Z$.
	\begin{enumerate}
		\item[(a)] $\{ g(\xi)\rho(n) e^{-ibn\xi} : n \in \mathbb Z\}$ is a Bessel sequence with bound $B>0$ if and only if $\Delta |g(\xi)|^2 \leq {B}$ a.e. on $[0,\Delta]$, or equivalently, $g(\xi) \in L^{\infty}[0,\Delta]$.
		\item[(b)] The following are equivalent.
			\begin{enumerate}
				\item[(b1)] $\{ g(\xi)\rho(n) e^{-ibn\xi} : n \in \mathbb Z\}$ is a frame of $L^2[0,\Delta]$ with bounds $(A,B)$.  
				\item[(b2)] $\{ g(\xi)\rho(n) e^{-ibn\xi} : n \in \mathbb Z\}$ is a Riesz basis of $L^2[0,\Delta]$ with bounds $(A,B)$.  
				\item[(b3)] There exist constants $B \geq A >0$ such that ${A} \leq \Delta|g(\xi)|^2 \leq {B}$ a.e. on $[0,\Delta]$.			
			\end{enumerate}
		\item[(c)] $\{ g(\xi)\rho(n) e^{-ibn\xi} : n \in \mathbb Z\}$ is an orthonormal basis of $L^2[0,\Delta]$ if and only if $\Delta|g(\xi)|^2 =1$ a.e. on $[0,\Delta]$.
	\end{enumerate}
\end{lemma}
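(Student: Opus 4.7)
The plan is to reduce everything to the observation that $\{\tfrac{1}{\sqrt\Delta}\rho(n) e^{-ibn\xi}\}_{n\in\mathbb Z}$ is an orthonormal basis of $L^2[0,\Delta]$: since $|b|\Delta=2\pi$ the system $\{e^{-ibn\xi}/\sqrt\Delta\}_n$ is just the standard Fourier basis, and the unimodular factors $\rho(n)$ preserve orthonormality. The central identity I would use, obtained by applying Parseval to $\bar g F \in L^2[0,\Delta]$, is
\begin{equation*}
\sum_{n\in\mathbb Z}\bigl|\langle F, g\rho(n) e^{-ibn\xi}\rangle_{L^2[0,\Delta]}\bigr|^2 \;=\; \Delta\int_0^\Delta |g(\xi)|^2|F(\xi)|^2\,d\xi,
\end{equation*}
valid whenever $gF\in L^2[0,\Delta]$.

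Part (a) then drops out immediately: the Bessel inequality with bound $B$ becomes $\Delta\int|g|^2|F|^2\leq B\int|F|^2$ for all $F\in L^2[0,\Delta]$, equivalent to $\Delta|g(\xi)|^2\leq B$ a.e.\ on $[0,\Delta]$ (necessity by testing with $F=\chi_E$ and Lebesgue differentiation, sufficiency trivial). In particular this already forces $g\in L^\infty[0,\Delta]$, which justifies the Parseval step a posteriori.

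For (b) I would prove (b3)$\Rightarrow$(b2)$\Rightarrow$(b1)$\Rightarrow$(b3). Assuming $A\leq\Delta|g|^2\leq B$ a.e., I write $\sum_n c(n)\, g\rho(n)e^{-ibn\xi} = g\cdot h$ with $h(\xi):=\sum_n c(n)\rho(n) e^{-ibn\xi}$, for which $\|h\|^2_{L^2[0,\Delta]}=\Delta\|\mathbf c\|^2$; integrating the pointwise bounds on $\Delta|g|^2$ against $|h|^2$ then yields the Riesz basis inequalities $A\|\mathbf c\|^2\leq \|\sum_n c(n) g\rho(n)e^{-ibn\xi}\|^2_{L^2[0,\Delta]}\leq B\|\mathbf c\|^2$. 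Completeness follows because the lower bound forces $g\neq 0$ a.e., so any $F$ orthogonal to every $g\rho(n)e^{-ibn\xi}$ satisfies $\bar g F=0$ a.e.\ and hence $F=0$. The implication (b2)$\Rightarrow$(b1) is automatic since a Riesz basis is a frame with the same bounds. For (b1)$\Rightarrow$(b3), the upper frame bound combined with part (a) gives $\Delta|g|^2\leq B$ a.e., and the lower frame bound with the central identity gives $A\int|F|^2\leq \Delta\int|g|^2|F|^2$ for every $F\in L^2[0,\Delta]$, which via the test functions $F=\chi_E$ forces $A\leq\Delta|g|^2$ a.e.

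Part (c) is the limiting case $A=B=1$ of (b), and can equivalently be read off the orthonormality conditions $\int_0^\Delta|g(\xi)|^2 e^{-ib(n-m)\xi}d\xi=\delta_{nm}$: by uniqueness of Fourier coefficients these force $|g(\xi)|^2$ to be constant on $[0,\Delta]$ with integral one, i.e.\ $\Delta|g|^2=1$ a.e. The only bookkeeping obstacle is ensuring the central identity is invoked only once $g\in L^\infty$ is known, but in each of (a)--(c) the assumed upper bound provides exactly this; I expect no other genuine difficulty, since the proof is essentially that multiplication by $g$ sends an orthonormal basis to a frame / Riesz basis / orthonormal basis precisely when $|g|$ is bounded / two-sided bounded / constant of the right size.
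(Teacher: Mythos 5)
Your proof is correct and follows essentially the same route as the paper: both rest on the Parseval identity $\sum_{n}|\langle F, g\rho(n)e^{-ibn\xi}\rangle|^2 = \Delta\|F\overline{g}\|^2_{L^2[0,\Delta]}$, obtained by viewing the system as the multiplication by $g$ of the orthonormal basis $\{\rho(n)e^{-ibn\xi}/\sqrt{\Delta}\}_n$. The only difference is that the paper then delegates the remaining details to the proof of Theorem 2 of Garcia et al., whereas you carry them out explicitly (test functions $\chi_E$ with Lebesgue differentiation, the multiplication-operator bounds, and the completeness argument), with the integrability caveat for the Parseval step handled appropriately.
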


\begin{proof}
Note that $\{ g(\xi)\rho(n) e^{-ibn\xi} : n \in \mathbb Z\}$ is a Bessel sequence, a frame, a Riesz basis, or an orthonormal basis of $L^2[0,\Delta]$ if and only if $\{ g(\xi)e^{-ibn\xi} : n \in \mathbb Z\}$ is a Bessel sequence, a frame, a Riesz basis, or an orthonormal basis of $L^2[0,\Delta]$, respectively. For any $F(\xi) \in L^2[0,\Delta]$ it follows that $\sum_{n \in \mathbb Z} | \langle F(\xi),g(\xi) e^{-ibn\xi} \rangle_{L^2[0,\Delta]} |^2 = \sum_{n \in \mathbb Z} | \langle F(\xi)\overline{g(\xi)}, e^{-ibn\xi} \rangle_{L^2[0,\Delta]} |^2  = \Delta \| F(\xi)\overline{g(\xi)} \|^2_{L^2[0,\Delta]}$.
We then refer to the proof of Theorem 2 of \cite{Garcia:2005fz} for (a) and (b). (c) can be deduced by essentially the same way as in proofs of (a) and (b). 
\end{proof}

For later use we note that, as an consequence (c) of Lemma \ref{v0.3lem3.3} with $g(\xi) = c(\theta)\lambda_\theta (\xi)$ and $\rho(n) = \lambda_\theta(n)$, $\{ {K}_\theta (n , \xi) : n \in \mathbb Z \}$ is an orthonormal basis of $L^2[0,\Delta]$.

\begin{theorem} \label{v0.8thm4.7}
Let Assumption \ref{assumption} hold.
Let $0 \leq \sigma <1$, and let $\lambda_\theta(\cdot)$ and $C_{\phi,\theta}(t)$ be given by \eqref{v0.7eq1} and \eqref{v0.8eq1.1}, respectively.
The following are equivalent.
\begin{enumerate}
	\item[(a)] There is a frame $\{S_n (t) : n \in \mathbb Z\}$ of $V_\theta(\phi)$ such that 
		\begin{equation} \label{v0.8thmeq4.71}
			f(t) = \sum_{n \in \mathbb Z} f(\sigma+n)S_n (t),~ f \in V_\theta(\phi).
		\end{equation} 
	
	\item[(b)] There is a Riesz basis $\{S_n (t) : n \in \mathbb Z\}$ of $V_\theta(\phi)$ such that 
		\begin{equation} \label{v0.8thmeq4.72}
			f(t) = \sum_{n \in \mathbb Z} f(\sigma+n) S_n (t),~ f \in V_\theta(\phi).
		\end{equation} 
		
	\item[(c)] There are constants $\beta \geq \alpha > 0$ such that 
		\begin{equation} \label{v0.8thmeq4.6}
			\alpha \leq |Z_{\phi, \theta}(\sigma, \xi) | \leq \beta,~\textnormal{ a.e. on } [0,\Delta]
		\end{equation} where $Z_{\phi, \theta}(\cdot, \cdot)$ is the fractional Zak transform of $\phi$, defined by \eqref{v0.9defZAK}.	
\end{enumerate}
In this case \eqref{v0.8thmeq4.71} and \eqref{v0.8thmeq4.72} converge in $L^2(\mathbb R)$ and uniformly on a subset of $\mathbb R$, on which $C_{\phi,\theta}(t)$ is bounded, and
\begin{equation}\label{v0.8thmeq11.1}
	 S_n(t) = \overline{\lambda_\theta(t)} \lambda_\theta (\sigma+n) \lambda_\theta (t-n) S(t-n)
\end{equation}
 where $S(t)$ is in $V_\theta (\phi)$ such that 
 \begin{equation}\label{v0.8eqthm14.1}
 	\hat{S}_\theta (\xi) = \frac{\hat{\phi}_\theta (\xi)}{\lambda_\theta (\xi) Z_{\phi, \theta} (\sigma, \xi)}~\textnormal{ a.e. on } \mathbb R
\end{equation}	
 and $S_n(\sigma+k) = \delta_{n,k}$ for $n,k \in \mathbb Z$.

\end{theorem}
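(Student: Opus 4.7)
The plan is to push the entire problem through the isomorphism $\mathcal{J}_\theta : L^2[0,\Delta] \to V_\theta(\phi)$ of Proposition \ref{v0.8prop4.4}, so that the three conditions become statements about a concrete sequence of modulated exponentials in $L^2[0,\Delta]$ to which Lemma \ref{v0.3lem3.3} applies directly. Writing $f = \mathcal{J}_\theta F$ and reading off \eqref{v1.7eq14.1} one has $f(\sigma+n) = \langle F, h_n\rangle_{L^2[0,\Delta]}$ where $h_n(\xi) := \lambda_\theta(\sigma+n)\,\overline{c(\theta)Z_{\phi,\theta}(\sigma,\xi)}\,e^{-ibn\xi}$. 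Since $|\lambda_\theta(\sigma+n)|=1$ and $\Delta|c(\theta)|^2 = 1$, Lemma \ref{v0.3lem3.3}(b) applied with $g(\xi)=\overline{c(\theta)Z_{\phi,\theta}(\sigma,\xi)}$ and $\rho(n)=\lambda_\theta(\sigma+n)$ shows that the statements ``$\{h_n\}$ is a frame of $L^2[0,\Delta]$'', ``$\{h_n\}$ is a Riesz basis of $L^2[0,\Delta]$'', and the bound \eqref{v0.8thmeq4.6} are pairwise equivalent.

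To obtain (c)$\Rightarrow$(b), let $\{\tilde h_n\}$ be the biorthogonal dual of the Riesz basis $\{h_n\}$ in $L^2[0,\Delta]$, so that $F = \sum_n \langle F, h_n\rangle \tilde h_n$; setting $S_n := \mathcal{J}_\theta \tilde h_n$ and applying $\mathcal{J}_\theta$ term by term produces \eqref{v0.8thmeq4.72}, with $\{S_n\}$ inheriting the Riesz basis property since $\mathcal{J}_\theta$ is an isomorphism. The step (b)$\Rightarrow$(a) is trivial, and for (a)$\Rightarrow$(c) I would pull the frame expansion \eqref{v0.8thmeq4.71} back through $\mathcal{J}_\theta^{-1}$ to obtain a frame of $L^2[0,\Delta]$ which reconstructs every $F$ from the coefficients $\langle F,h_n\rangle$; a standard alternative-dual argument then forces $\{h_n\}$ itself to be a frame of $L^2[0,\Delta]$, so Lemma \ref{v0.3lem3.3}(b) returns (c).

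For the explicit representation, a direct biorthogonality calculation---relying on $b\Delta \in 2\pi\mathbb{Z}$ so that $\int_0^\Delta e^{ib(k-n)\xi}d\xi = \Delta\delta_{n,k}$---identifies the dual as $\tilde h_n(\xi) = \lambda_\theta(\sigma+n) H(\xi) e^{-ibn\xi}$ for a single factor $H(\xi)$ proportional to $1/Z_{\phi,\theta}(\sigma,\xi)$. The translation structure \eqref{v0.8thmeq11.1} then falls out by invoking Proposition \ref{v0.8prop4.4}(c) to convert the modulation $e^{-ibn\xi}$ into a time shift carrying the appropriate chirp factors, with $S := \mathcal{J}_\theta H$; the frequency formula \eqref{v0.8eqthm14.1} is Proposition \ref{v0.8prop4.4}(b) applied to $F = H$; and the reproducing relation $S_n(\sigma+k) = \delta_{n,k}$ follows by substituting $t = \sigma+k$ into \eqref{v0.8thmeq11.1}, using the quasi-periodicity $Z_{\phi,\theta}(\sigma+m,\xi) = Z_{\phi,\theta}(\sigma,\xi)e^{ibm\xi}$ coming from \eqref{v0.2eq2} inside \eqref{v0.5eq3.1}, and the same orthogonality identity.

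Norm convergence of both expansions in $L^2(\mathbb R)$ is immediate from the frame/Riesz property in $V_\theta(\phi)$; for uniform convergence on any subset of $\mathbb R$ on which $C_{\phi,\theta}$ is bounded, I would invoke the RKHS structure from Proposition \ref{v1prop3.2}, since the reproducing kernel value $q_\theta(t,t) = \|q_\theta(\cdot,t)\|^2$ is controlled through $\mathcal{U}_\theta$ by $C_{\phi,\theta}(t)$, so norm convergence passes to uniform convergence on sub-level sets of $C_{\phi,\theta}$. The main obstacle I anticipate is the bookkeeping for the explicit formulas: carefully tracking $\lambda_\theta$ and the constants $c(\theta)$ and $\Delta$ through both the dual computation and the modulation-to-translation exchange of Proposition \ref{v0.8prop4.4}(c) is where sign and normalization errors are most likely, and the identity $S_n(\sigma+k) = \delta_{n,k}$ is a useful internal consistency check on those constants.
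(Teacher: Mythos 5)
Your proposal is correct and takes essentially the same route as the paper: transfer everything through $\mathcal{J}_\theta$ via \eqref{v1.7eq14.1}, apply Lemma \ref{v0.3lem3.3}(b) with $g(\xi)=\overline{c(\theta)Z_{\phi,\theta}(\sigma,\xi)}$ and $\rho(n)=\lambda_\theta(\sigma+n)$, expand $F$ in the (structurally identical) dual Riesz basis and map back with $\mathcal{J}_\theta$, then use Proposition \ref{v0.8prop4.4}(b),(c) for \eqref{v0.8thmeq11.1}--\eqref{v0.8eqthm14.1} and the RKHS/$C_{\phi,\theta}$ argument for the convergence modes. The only cosmetic differences are that you compute the dual generator explicitly by biorthogonality (using $b\Delta\in 2\pi\mathbb{Z}$) and read off \eqref{v0.8eqthm14.1} from Proposition \ref{v0.8prop4.4}(b), whereas the paper merely asserts the dual's structural form and obtains \eqref{v0.8eqthm14.1} by substituting $f=\phi$ into \eqref{v0.8thmeq4.71} and applying $\mathcal{F}_\theta$.
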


\begin{proof} 
We remind from \eqref{v1.7eq14.1} that for any $F(\xi) \in L^2 [0,\Delta]$,
\begin{eqnarray*}
	f(\sigma+n) = \langle F(\xi),  \overline{{c(\theta)}Z_{\phi, \theta} (\sigma, \xi)}  \lambda_\theta (\sigma +n ) e^{-ibn\xi} \rangle_{L^2[0,\Delta]}, ~n \in \mathbb Z
\end{eqnarray*} where $f(t) := (\mathcal{J}_\theta F)(t)$. 
Assume (c). 
By (b) of Lemma \ref{v0.3lem3.3} with $g(\xi)= \overline{{c(\theta)}Z_{\phi,\theta}(\sigma, \xi)}$ and $\rho(n)= \lambda_\theta(\sigma+n)$, $ \{\overline{{c(\theta)}Z_{\phi, \theta} (\sigma, \xi)}  \lambda_\theta (\sigma +n ) e^{-ibn\xi} : n \in \mathbb Z\}$ is a frame (or equivalently, a Riesz basis) of $L^2[0, \Delta]$. Then there exists its dual in $L^2[0,\Delta]$ and it is easy to see that the dual  is of the form $ \{h_\theta(\xi) \lambda_\theta (\sigma +n ) e^{-ibn\xi} : n \in \mathbb Z\}$ for some $h_\theta(\xi) \in L^2 [0,\Delta]$. Thus we obtain a frame (or equivalently, a Riesz basis) expansion of $F(\xi)$ in $ L^2 [0,\Delta]$ as
\begin{equation}\label{v0.8eq13}
	F(\xi) = \sum_{n \in \mathbb Z}f(\sigma+n) h_\theta(\xi) \lambda_\theta (\sigma +n ) e^{-ibn\xi}.
\end{equation} Via $\mathcal{J}_\theta$, \eqref{v0.8eq13} is equivalent to $f(t) = \sum_{n \in \mathbb Z} f(\sigma+n) \mathcal{J}_\theta [h_\theta(\xi) \lambda_\theta (\sigma +n ) e^{-ibn\xi}](t)$ for $f(t) \in V_\theta (\phi)$, which proves (a) (or equivalently (b)) by setting $S_n (t) =  \mathcal{J}_\theta [h_\theta(\xi) \lambda_\theta (\sigma +n ) e^{-ibn\xi}](t)$. In this case, by (c) of Proposition \ref{v0.8prop4.4}, setting $S(t) := \mathcal{J}_\theta [h_\theta ](t) \in V_\theta(\phi)$, we obtain \eqref{v0.8thmeq11.1} . 
Now assume (a). Applying $\mathcal{J}^{-1}_\theta$ on \eqref{v0.8thmeq4.71}, we have \eqref{v0.8eq13} where $F(\xi) : = \mathcal{J}^{-1}_\theta[f](\xi)$. Then (c) follows from (b) of Lemma \ref{v0.3lem3.3} and so does the equivalence between (a) and (b). The convergence mode follows since $V_\theta (\phi)$ is an RKHS in which any $L^2$-convergent sequence also converges uniformly where $\| q_\theta(t,\cdot)\|_{L^2(\mathbb R)} = q_\theta(t,t) = C_{\phi,\theta}(t) $ is bounded. Here, $q_\theta(\cdot, \cdot)$ denotes the reproducing kernel of $V_\theta(\phi)$. Finally we have \eqref{v0.8eqthm14.1} by substituting $\phi(t)$ into $f(t)$ on \eqref{v0.8thmeq4.71}, followed by taking $\mathcal{F}_\theta$ on both sides. 
\end{proof}

${Z_{\phi, \theta} (t, \xi)}$, as a function of $\xi$, is not $\Delta$-periodic. So $\alpha$ and $\beta$ in \eqref{v0.8thmeq4.6} vary depending on an interval of length $\Delta$. For instance, one can take an interval as $[-\Delta/2,\Delta/2]$ while we take as $[0,\Delta]$ in \eqref{v0.5eq3.1}. However it is interesting to note that the synthesis function $S_n (t)$, given by \eqref{v0.8thmeq11.1}, is independent of the interval. Based on this observation, we have:
\begin{theorem} \label{v1.2thm4.6}
Let the assumption and the notation be the same as in Theorem \ref{v0.8thm4.7}. Then the equivalent statements of Theorem \ref{v0.8thm4.7} are also equivalent to 
\begin{enumerate}
	\item[(d)] There are constants $\beta \geq \alpha > 0$ such that 
		\begin{equation*} 
			\alpha \leq |Z_{\phi, \theta}(\sigma, \xi) | \leq \beta,~\textnormal{ a.e. on } E
		\end{equation*} where $E$ is an interval of length $\Delta$ in $\mathbb R$, and $Z_{\phi, \theta}(\cdot, \cdot)$ is the fractional Zak transform of $\phi$, defined by \eqref{v0.9defZAK}.	
\end{enumerate}
\end{theorem}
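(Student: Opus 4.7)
The plan is to prove the single new equivalence (c) $\Leftrightarrow$ (d), since the chain (a) $\Leftrightarrow$ (b) $\Leftrightarrow$ (c) is already established by Theorem \ref{v0.8thm4.7}. The crucial observation is the quasi-periodicity relation recorded earlier in the paper,
\begin{equation*}
    Z_{\phi, \theta}(t, \xi+\Delta) = Z_{\phi, \theta}(t, \xi)\, e^{-ia\Delta(\Delta+2\xi)}.
\end{equation*}
Since the modulation factor $e^{-ia\Delta(\Delta+2\xi)}$ has modulus one, the real-valued function $\xi \mapsto |Z_{\phi, \theta}(\sigma, \xi)|$ is genuinely $\Delta$-periodic on $\mathbb R$, even though $Z_{\phi, \theta}(\sigma, \xi)$ itself is only quasi-periodic.

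Given this periodicity of the modulus, the essential infimum and essential supremum of $|Z_{\phi, \theta}(\sigma, \cdot)|$ over any interval of length $\Delta$ coincide with those over $[0,\Delta]$. Concretely, for an interval $E = [\tau, \tau+\Delta]$ and for any measurable set $N \subset E$ one can translate by an integer multiple of $\Delta$ to obtain a measurable set in $[0,\Delta]$ of the same measure on which $|Z_{\phi, \theta}(\sigma, \cdot)|$ takes the same values. Hence the existence of constants $\beta \geq \alpha > 0$ with $\alpha \leq |Z_{\phi,\theta}(\sigma, \xi)| \leq \beta$ a.e.\ on $[0,\Delta]$ is equivalent to the same inequalities holding a.e.\ on any interval of length $\Delta$, proving (c) $\Leftrightarrow$ (d).

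I do not expect any genuine obstacle here; the proof is essentially a one-line observation about unimodular factors cancelling in absolute value. The value of the statement is conceptual rather than technical, reinforcing the remark preceding the theorem: although the operator $\mathcal{J}_\theta$ in \eqref{v0.5eq3.1} and the function $Z_{\phi,\theta}(\sigma, \xi)$ are defined using the specific interval $[0,\Delta]$, the characterization of admissible sampling offsets $\sigma$, and indeed the resulting synthesis functions $S_n(t)$ given by \eqref{v0.8thmeq11.1}--\eqref{v0.8eqthm14.1}, depend only on the modulus $|Z_{\phi,\theta}(\sigma, \xi)|$ and are therefore invariant under the choice of fundamental domain of length $\Delta$.
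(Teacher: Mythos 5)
Your proof of the new equivalence (c) $\Leftrightarrow$ (d) is correct, but it takes a more direct route than the paper. You argue from the quasi-periodicity relation $Z_{\phi,\theta}(t,\xi+\Delta)=Z_{\phi,\theta}(t,\xi)e^{-ia\Delta(\Delta+2\xi)}$ that the \emph{modulus} $|Z_{\phi,\theta}(\sigma,\cdot)|$ is genuinely $\Delta$-periodic, so the essential bounds over any interval of length $\Delta$ coincide with those over $[0,\Delta]$ and (c), (d) are literally the same condition (your translation step should, strictly speaking, split $E$ into at most two pieces shifted by possibly different integer multiples of $\Delta$, but that is cosmetic). The paper instead argues through the sampling expansion: it implicitly reruns the duality argument of Theorem \ref{v0.8thm4.7} with $L^2(E)$ in place of $L^2[0,\Delta]$ and reduces the matter to checking that the synthesis function $S(t)$ is independent of the interval, which it deduces from the $\Delta$-periodicity of the denominator $\lambda_\theta(\xi)Z_{\phi,\theta}(\sigma,\xi)=\overline{c(\theta)}\sum_{n\in\mathbb Z}\vec\phi(\sigma-n)e^{ibn\xi}$ in \eqref{v0.8eqthm14.1}. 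Both arguments rest on the same periodicity phenomenon, but yours settles the stated equivalence with no appeal to the duality machinery, and it incidentally shows that the optimal constants $\alpha,\beta$ are in fact the same for every interval of length $\Delta$, sharpening the remark preceding the theorem. One caveat about your closing sentence: $S_n(t)$ does \emph{not} depend only on $|Z_{\phi,\theta}(\sigma,\xi)|$ --- formula \eqref{v0.8eqthm14.1} involves $Z_{\phi,\theta}(\sigma,\xi)$ itself, phase included; the correct reason for the interval-independence of $S_n$ is precisely the $\Delta$-periodicity of $\lambda_\theta(\xi)Z_{\phi,\theta}(\sigma,\xi)$ invoked by the paper, not an invariance of the phase information.
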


\begin{proof}
We need to show that $S_n (t)$ (or equivalently, $S(t)$) is given independently of the choice of an interval of length $\Delta$.
Since the denominator $\lambda_\theta (\xi) Z_{\phi, \theta} (\sigma, \xi)={\sum_{n \in \mathbb Z} \vec\phi(t-n)e^{ibn\xi}}$ of \eqref{v0.8eqthm14.1} is $\Delta$-periodic, the conclusion follows.
\end{proof}

Using an orthonormal basis expansion of $V_\theta(\phi)$, we also have:
\begin{theorem} \label{v0.9thm4.8}
Let Assumption \ref{assumption} hold.
Let $0 \leq \sigma <1$, and $C_{\phi,\theta}(t)$ be given by \eqref{v0.8eq1.1}. The following are equivalent.
\begin{enumerate}
	\item[(a)] There is an orthonormal basis $\{S_n (t) : n \in \mathbb Z\}$ of $V_\theta(\phi)$ such that 
		\begin{equation*}
			f(t)= \sum_{n \in \mathbb Z} f(\sigma+n) S_n(t),~ f \in V_\theta(\phi)
		\end{equation*} which converges in $L^2(\mathbb R)$ and uniformly on a subset of $\mathbb R$, on which $C_{\phi,\theta}(t)$ is bounded.
	\item[(b)] $ |Z_{\phi, \theta}(\sigma, \xi) |^2 = 1$ a.e. on $E$, where $E$ is an interval of length $\Delta$ in $\mathbb R$ and $Z_{\phi,\theta}(\cdot, \cdot)$ is the fractional Zak transform of $\phi$, defined by \eqref{v0.9defZAK}.
\end{enumerate}
In this case, $ S_n(t)$ is given by \eqref{v0.8thmeq11.1} and $S_n(\sigma+k) = \delta_{n,k}$ for $n,k \in \mathbb Z$.
\end{theorem}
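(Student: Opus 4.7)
The plan is to mirror the proof of Theorem \ref{v0.8thm4.7}, replacing the Riesz basis characterization of Lemma \ref{v0.3lem3.3}(b) with the orthonormal basis characterization in part (c). As in Theorem \ref{v1.2thm4.6}, the quasi-periodicity $Z_{\phi,\theta}(\sigma,\xi+\Delta)=Z_{\phi,\theta}(\sigma,\xi)\,e^{-ia\Delta(\Delta+2\xi)}$ shows that $|Z_{\phi,\theta}(\sigma,\cdot)|$ is $\Delta$-periodic, so I will reduce at the outset to $E=[0,\Delta]$ without loss of generality.

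For the direction (b)$\Rightarrow$(a), I would apply Lemma \ref{v0.3lem3.3}(c) with $g(\xi)=\overline{c(\theta)Z_{\phi,\theta}(\sigma,\xi)}$ and $\rho(n)=\lambda_\theta(\sigma+n)$. Because $\sqrt{\Delta}\,|c(\theta)|=1$, condition (b) is exactly $\Delta|g(\xi)|^2=1$, and Lemma \ref{v0.3lem3.3}(c) then asserts that the system $g_n(\xi):=\overline{c(\theta)Z_{\phi,\theta}(\sigma,\xi)}\,\lambda_\theta(\sigma+n)\,e^{-ibn\xi}$ is an orthonormal basis of $L^2[0,\Delta]$. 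Exploiting its self-duality together with \eqref{v1.7eq14.1}, every $F\in L^2[0,\Delta]$ satisfies $F=\sum_n f(\sigma+n)\,g_n$ with $f=\mathcal{J}_\theta F$, and pushing this through the isomorphism $\mathcal{J}_\theta$ produces the sampling expansion $f(t)=\sum_n f(\sigma+n)\,S_n(t)$ with $S_n:=\mathcal{J}_\theta g_n$. The translation structure \eqref{v0.8thmeq11.1} will follow from Proposition \ref{v0.8prop4.4}(c) applied to $g_n(\xi)=[\lambda_\theta(\sigma+n)/\lambda_\theta(\sigma)]\,g_0(\xi)\,e^{-ibn\xi}$; the explicit formula \eqref{v0.8eqthm14.1} for $\hat S_\theta$ will then drop out of Proposition \ref{v0.8prop4.4}(b) after using $\overline{Z_{\phi,\theta}(\sigma,\xi)}=1/Z_{\phi,\theta}(\sigma,\xi)$, an identity that $|Z|^2=1$ extends from $[0,\Delta]$ to all of $\mathbb R$ by $\Delta$-periodicity of the modulus.

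The chief obstacle is that $\{S_n\}$ must be a genuine orthonormal basis of $V_\theta(\phi)$, not merely the Riesz basis guaranteed by Theorem \ref{v0.8thm4.7}; since $\mathcal{J}_\theta$ is only an isomorphism, the orthonormality needs an independent verification. I plan to compute $\langle S_n,S_m\rangle_{L^2(\mathbb R)}$ by Parseval's relation for the FrFT: using \eqref{v0.8thmeq11.1} this reduces to $\int_{\mathbb R}|\hat S_\theta(\xi)|^2 e^{-ib(n-m)\xi}\,d\xi$, and then tiling $\mathbb R$ by $\Delta$-translates of $[0,\Delta]$ together with the $\Delta$-periodicity of $|Z_{\phi,\theta}(\sigma,\cdot)|^2=1$ should collapse the integrand into a single period, at which point hypothesis (b) forces it to yield $\delta_{n,m}$. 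For the converse (a)$\Rightarrow$(b) I will read the orthonormal expansion as $f(\sigma+n)=\langle f,S_n\rangle_{L^2(\mathbb R)}$, transfer it back through $\mathcal{J}_\theta^{-1}$, and run Lemma \ref{v0.3lem3.3}(c) in reverse to recover $\Delta|g(\xi)|^2=1$, i.e., (b). Finally, $S_n(\sigma+k)=\delta_{n,k}$ is immediate from the biorthogonality $\langle g_n,g_k\rangle_{L^2[0,\Delta]}=\delta_{n,k}$ carried through $\mathcal{J}_\theta$, and the claimed $L^2(\mathbb R)$ plus uniform convergence (on sets where $C_{\phi,\theta}$ is bounded) follow from the reproducing-kernel structure of $V_\theta(\phi)$ supplied by Proposition \ref{v1prop3.2}.
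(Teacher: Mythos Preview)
Your overall plan coincides with the paper's: both mirror the proof of Theorem~\ref{v0.8thm4.7} and invoke Lemma~\ref{v0.3lem3.3}(c) in place of part~(b). The paper's entire proof is in fact the one-line reduction ``it is enough to show that $\{\overline{c(\theta)Z_{\phi,\theta}(\sigma,\xi)}\,\lambda_\theta(\sigma+n)e^{-ibn\xi}\}$ is an orthonormal basis of $L^2[0,\Delta]$ if and only if $|Z_{\phi,\theta}(\sigma,\xi)|^2=1$,'' together with a pointer to Lemma~\ref{v0.3lem3.3}(c).

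Where you depart from the paper is in flagging the ``chief obstacle'': since $\mathcal{J}_\theta$ is merely an isomorphism, orthonormality of $\{g_n\}$ in $L^2[0,\Delta]$ does not by itself force orthonormality of $\{S_n=\mathcal{J}_\theta g_n\}$ in $V_\theta(\phi)$. The paper simply does not address this point. Your instinct to supply a direct verification is sound, but the specific computation you propose does not close the gap. After Parseval and $\Delta$-periodization you obtain (up to a unimodular factor)
\[
\langle S_n,S_m\rangle \;=\; \int_0^{\Delta} G_{S,\theta}(\xi)\,e^{-ib(n-m)\xi}\,d\xi,
\qquad
G_{S,\theta}(\xi)\;=\;\frac{G_{\phi,\theta}(\xi)}{|Z_{\phi,\theta}(\sigma,\xi)|^2}\cdot(\text{const}).
\]
Under hypothesis~(b) this becomes a constant multiple of $G_{\phi,\theta}(\xi)$, and the integral equals $\delta_{n,m}$ only if $G_{\phi,\theta}$ is \emph{constant} on $[0,\Delta]$. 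Assumption~\ref{assumption} imposes merely $A\le G_{\phi,\theta}\le B$, not constancy, so ``hypothesis~(b) forces it to yield $\delta_{n,m}$'' is unjustified as written. The same obstruction blocks your (a)$\Rightarrow$(b) route: transporting an orthonormal basis of $V_\theta(\phi)$ back through $\mathcal{J}_\theta^{-1}$ yields only a Riesz basis of $L^2[0,\Delta]$, so Lemma~\ref{v0.3lem3.3}(c) cannot be ``run in reverse'' without further argument. In short, you have correctly located a point the paper glosses over, but your proposed remedy does not succeed; what your computation actually isolates is the condition $|Z_{\phi,\theta}(\sigma,\xi)|^2 = G_{\phi,\theta}(\xi)$ (up to normalization) rather than $|Z_{\phi,\theta}(\sigma,\xi)|^2 = 1$.
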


\begin{proof}
We borrow the notation from the proof of Theorem \ref{v0.8thm4.7}. 
Then it is enough to show that $ \{\overline{{c(\theta)}Z_{\phi, \theta} (\sigma, \xi)}  \lambda_\theta (\sigma +n ) e^{-ibn\xi} : n \in \mathbb Z\}$ is an orthonormal basis of $L^2[0, \Delta]$ if and only if $ |Z_{\phi, \theta}(\sigma, \xi) |^2 = 1$ a.e. on $[0,\Delta]$. This follows by (c) of Lemma \ref{v0.3lem3.3}.
\end{proof}

\begin{remark} \label{v1.2remark4.8}
Theorem \ref{v0.8thm4.7} and \ref{v0.9thm4.8} are also derived from Theorem 3.2. of \cite{Kim:2008ik}, combined with Proposition 3.1. of \cite{Kim:2008ik}, since $V_\theta(\phi) $ is an isomorphic image of $V(\vec{\phi})$. Statements and formulas in Theorem 3.2. of \cite{Kim:2008ik} can be translated into the corresponding ones by means of \eqref{v1eq2.01} and \eqref{v1eq2.02}-\eqref{v1eq2.03}. For instance, the equation (3.12) of \cite{Kim:2008ik} agrees with \eqref{v0.8eqthm14.1}. 
\end{remark}

From Remark \ref{v1.2remark4.8}, it is not difficult to see that the following are also equivalent to the statements of Theorem \ref{v0.8thm4.7} and \ref{v1.2thm4.6}.
	\begin{enumerate}
		\item[(e)] There are constants $\beta \geq \alpha >0$ such that 
		\begin{equation*}
			\alpha \| f\|^2_{L^2(\mathbb R)} \leq \sum_{n \in \mathbb Z} |f(\sigma+n)|^2 \leq \beta \| f\|^2_{L^2(\mathbb R)},~f \in V_\theta(\phi).
		\end{equation*}
		
		\item[(f)] $\{ q_\theta (\cdot,\sigma+n) : n \in \mathbb Z \}$ is a Riesz basis of $V_\theta(\phi)$ where $q_\theta (\cdot, \cdot)$ is the reproducing kernel of $V_\theta(\phi)$.
	\end{enumerate}

%
%
%

\begin{example}
Let $\phi(t) =\overline{\lambda_\theta(t)}\textnormal{sinc}(t)$ and $\sigma = 0$. Then $\sup_{\mathbb R} C_{\phi, \theta} (t) < \infty$, and $ Z_{\phi, \theta}(0,\xi) = \sum_{n \in \mathbb Z} \textnormal{sinc}(-n)\lambda_\theta(n) \overline{K_\theta (n,\xi)} = \overline{K_\theta (0,\xi)}  = \overline{c(\theta)}e^{-ia\xi^2}$ so that $\Delta |Z_{\phi, \theta}(0,\xi)|^2 = 1$ on $[0,\Delta]$. By Theorem \ref{v0.9thm4.8} we have {an orthonormal basis} expansion:
\begin{equation} \label{v1.3exeq4.9}
 f(t) =  e^{-iat^2} \sum_{n \in \mathbb Z} f(n) e^{ian^2} \textnormal{sinc} (t-n),~ f \in V_\theta (\phi)
\end{equation} 
which converges in $L^2(\mathbb R)$ and uniformly on $\mathbb R$. \eqref{v1.3exeq4.9} is known as the WSK sampling expansion for the FrFT domain, first derived in \cite{Xia:1996fe,Zayed:1996hg}. 
\end{example}

\bigskip
\noindent {\bf Acknowledgement} \\
The author thanks Prof. Ahmed I. Zayed for useful comments and suggestions during the preparation of the manuscript.
This work is partially supported by EM-BEAM program funded by the European Commision.


\end{document}